\documentclass[11pt]{article}
\usepackage{amssymb, amsmath, amsthm, color, xcolor, amsfonts,tabularx, authblk, relsize, geometry}
\usepackage[english]{babel}
\geometry{a4paper,left=2.5cm,right=2.5cm,top=3cm,bottom=2.5cm}
\def\supp{\mathop{\rm supp}\nolimits}

\newtheorem{theorem}{Theorem}[section]

\newtheorem{lemma}[theorem]{Lemma}

\newtheorem{corollary}[theorem]{Corollary}
\newtheorem{definition}[theorem]{Definition}
\newtheorem{remark}[theorem]{Remark}
\newtheorem{example}[theorem]{Example}
\newtheorem{notation}[theorem]{Notations}

\newcommand{\R}{\mathbb{R}}

\renewenvironment{proof}[1][.]{%
\bigskip\noindent{\bf Proof#1 }}{%
\hfill$\blacksquare$\bigskip}

\newcommand{\mP}{\mathcal P}

\newcommand{\mcal}[1]{\mathcal{#1}}
\begin{document}
\pagestyle{myheadings}
\title{The Hutchinson-Barnsley theory for iterated function systems with general measures}
\author[1]{Elismar R. Oliveira
\thanks{E-mail: elismar.oliveira@ufrgs.br}}
\author[2]{Rafael R. Souza
\thanks{E-mail: rafars@mat.ufrgs.br}}
\affil[1]{Universidade Federal do Rio Grande do Sul, Universidade de Aveiro}
\affil[2]{Universidade Federal do Rio Grande do Sul}

\date{\today}
\maketitle

\begin{abstract}
In this work we present iterated function systems with general measures(IFSm) formed by a set of maps $\tau_{\lambda}$ acting over a compact space $X$,  for a compact space of indices, $\Lambda$. The Markov process $Z_k$ associated to the IFS iteration is defined using a general family of probabilities measures $q_x$ on $\Lambda$, where $x \in X$:  $Z_{k+1}$ is given by $\tau_{\lambda}(Z_k)$, with $\lambda$ randomly chosen according to $q_x$. We prove the existence of the topological attractor and the existence of the invariant attracting measure for the Markov Process. We also prove that the support of the invariant measure is given by the attractor and results on the stochastic stability of the invariant measures, with respect to changes in the family $q_x$. 
\end{abstract}
\vspace {0.8cm}

\emph{Key words and phrases: iterated function systems, attractors, infinite iterated function systems, fractal, Hutchinson measures, invariant measures, stochastic stability}

\emph{Mathematics Subject Classification:  \; Primary: {28A80, 37C70.} \;  Secondary: {60J05, 28C15, 37H99, 37A05.}}



\section{Introduction}

An iterated function system (IFS for short) is a family of maps acting from a set to itself, and, among several applications, such as in the so called fractal geometry,   can be seen both as a generalization of a usual dynamical system, as well as a model to define Markov Probabilistic Processes.

We begin by observing that a IFS can be seen as a generalization of a dynamical system.
Remember that a dynamical system is defined by a map $T: X \to X$ acting from a set $X$ to itself.  An initial point $x_0 \in X$ can be  iterated by $T$ producing the orbit $\{x_0, T(x_0), T^2(x_0), ... \}$, whose limit or cluster points are the objects of main interest, from a dynamical point of view. On the other hand,  in a broader sense, an IFS, denoted by $\mathcal{R}=(X, \tau)$, is a family of maps $\tau_\lambda$ acting from $X$ to itself, indexed by some index set $\lambda \in \Lambda$.

Given an initial point $x_0$, it can be iterated by choosing at each step a different map $\tau_{\lambda}$, producing multiple orbits $\{Z_j, j \geq 0\}=\{Z_0=x_0, Z_1=\tau_{\lambda_0}(x_0), Z_2= \tau_{\lambda_1}(\tau_{\lambda_0}(x_0)), ... \}$. We no longer have a single orbit as we had in the case of dynamical systems, now the orbit is in fact  a set of orbits controlled by a sequence $(\lambda_0, \lambda_1, ...) \in \Lambda^{\mathbb{N}}$ that has to be chosen according to some (usually probabilistic) rule.

Before getting in the probabilistic aspects of the theory,  we can search for possible topological limit or cluster points and its geometrical features. Suppose that $(X,d)$ is a complete metric space, each $\tau_{\lambda}$ is continuous in both variables. The Hutchinson-Barnsley (or fractal) operator $F: K(X) \to K(X)$, is defined  by,
$$F(B)=\bigcup_{\lambda \in \Lambda}\tau_{\lambda}(B),$$
for $B \in K(X)$, where $K(X)$ is the family of nonempty compact sets of $X$.

We call a compact set  invariant or fractal if $F(\Omega)=\Omega$. Additionally $\Omega$ is called a fractal attractor if the orbit of $B$ by $F$, given by $\{B, F(B), F^2(B), ... \}$ converge, w.r.t. the Hausdorff-Pompeiu metric to $\Omega$, for any $B \in K(X)$ (see \cite{berinde2013role} for details on the Hausdorff-Pompeiu metric).

Now, passing to the probabilistic aspects of the theory,  let $\mathcal{P}(X)$ be the set of Borel probability measures defined on the metric space $X$, and  $q=\{q_x \}_{x \in X}$ be a family of probability measures on $\mathcal{P}(X)$ indexed by $x \in X$. We denote $\mathcal{R}=(X, \tau, q)$ an IFS with measures (IFSm). The particular case where $q_x=cte, \forall x \in X$ is well known in the literature as IFS with probabilities (IFSp). We have multiple cases of study in the literature concerning the nature of the index set $\Lambda$ (i.e. the quantity of maps defining the IFS as finite, compact, countable, etc), and $q$ (most notably if $q$ is a singleton or if $q_x$ actually depends on $x$).

We must emphasize that most of the difficulties in this work arises from the nature of the dependence $x \to q_x$. This degree of generality allow us to modeling situations where the probability transitions are non uniform with respect the phase space. Only a few references in the literature have addressed this class of problems, mostly in the context of a finite set of maps, as it presents substantial challenges to the classical constructions that have proven effective for IFSp problems.  We believe that our approach not only establish the existence of a new object, the invariant measure for an IFS with measures under Lipschitz contractivity of maps and weights, but also provides several formulations under which this result is valid such as eventually contractive mappings, weakly hyperbolic mappings (diameter of dynamical balls decreasing to zero) and average contractive weights. This opens a wide range of applications. For instance, the Ruelle-Perron-Frobenius theorem and the Ergodic theorem are not know in this setting.

We consider the Markov process $\{Z_{0}, Z_{1}, Z_{2}, ...\}$ where $Z_0$ is randomly chosen according to an initial distribution $\mu_0 \in \mathcal{P}(X)$, and for each $j \geq 0$, $Z_{j+1}\in X$ is
obtained from the previous $Z_{j}$ by defining
$$Z_{j+1}=\tau_{\lambda_j}(Z_j)$$ where the index $\lambda_j$ is randomly chosen according to the probability distribution $q_{X_j}$.

If we call $\mu_j$  the probability distribution of $Z_j$,
then the distribution of  $Z_{j+1}$, for $j \geq1$,  is given by the recurrence relation $\mu_{j+1}= \mcal{L}_{  q}(\mu_{j})$, where
$\mcal{L}_q:\mathcal{P}(X) \to \mathcal{P}(X)$ is the Markov operator, defined for any $\nu \in \mathcal{P}(X)$ and $f \in C(X,\R)$,
by $$\int_{X} f(x) d\mcal{L}_{  q}(\nu)(x)=  \int_{X} B_q(f)(x) d\nu(x),$$
and $B_q: C(X,\R) \to C(X,\R)$ is the transfer operator,
 given by
\begin{equation*}
	B_{q}(f)(x)=\int_{\Lambda} f(\tau_{\lambda}(x)) dq_x(\lambda).
\end{equation*}
(We denote by $C(X,\R)$ the Banach space of all real continuous functions equipped with supremum norm $\| \; \|_{\infty}$.)
Therefore, a fixed point for $\mcal{L}_q$, if exists, represents the invariant measure, or stationary distribution of the Markov process $\{Z_{j},\; j \geq 0\}$.
An attracting fixed point of $\mcal{L}_q$, if exists, is called the attracting invariant measure for the process $\{Z_{j},\; j \geq 0\}$.
(Here we mean by attracting fixed point a probability $\mu_e$ such that, for any $\mu \in \mathcal{P}$, we have that  $\mcal{L}^k_{  q}(\mu)$ converges to $\mu_e$, when $k \to \infty$,  w.r.t. the
Monge-Kantorovich  distance.)
See \cite{Doob48}, \cite{Lasota2002} and\cite{BarnDemko85}, for details on Markov operators and its connection with IFS, and Ruelle~\cite{ruelle1967variational,ruelle1968statistical}, Walters~\cite{walters1975ruelle} and Fan~\cite{fan1999iterated})
for the Transfer operator, also called Ruelle operator.
See \cite{hutchinson1981fractals} for details on the Monge-Kantorovich metric.

A remarkable fact that shows how the dynamical aspects of the theory
are related to the statistical aspects is the fact that, under appropriate conditions on the maps $\tau_{\lambda}$ and  also on the map $x \to q_x$, the support of the invariant measure coincides with the fractal attractor set, i.e., the attracting fixed point of the Hutchinson-Barnsley operator.

The study of the conditions under which we have, for a given IFS, a fractal attractor which is the support of an invariant measure is called the Hutchinson-Barnsley theory.

Historically, the first papers on IFSp, by Hutchinson, Barnsley and many others in the beginnings of the 80's
(see for example Hutchinson~\cite{hutchinson1981fractals}),
 considered $\Lambda=\{1,2,..., n\}$ as a finite set and $q$ a singleton (i.e. a probability $q_x=q$ that is unique).
In these papers we have  a finite  number of maps, and each one is chosen according to a fixed probability $p_j>0$ where $p_1+ \cdots + p_n=1$.
Under these conditions the transfer operator is given by
\begin{equation}\label{eq:class_transf_op}
	B_{q}(f)(x)=\int_{\Lambda} f(\tau_{\lambda}(x)) dq(\lambda)= \sum_{j=1}^{n} p_j \, f(\tau_{j}(x)),
\end{equation}
for any $f \in C(X,\R)$, where the measure $q$ is given by $$dq(\lambda)= \sum_{j=1}^{n} p_j  \delta_{j}(\lambda), \; \forall x \in X.$$

A first generalization, yet for a finite $\Lambda=\{1,2,..., n\}$, was for IFSp where the constant probability $p_j>0$ such that $p_1+ \cdots + p_n=1$ is replaced by a variable probability $p_j(x)>0$ where $p_1(x)+ \cdots + p_n(x)=1$ for all $x \in X$. Now, the transfer operator is defined by
\begin{equation}\label{eq:class_variable_transf_op}
	B_{q}(f)(x)=\int_{\Lambda} f(\tau_{\lambda}(x)) dq_{x}(\lambda)= \sum_{j=1}^{n} p_j(x) \, f(\tau_{j}(x)),
\end{equation}
where the measure $q_{x}$ is given by $dq_{x}(\lambda)= \sum_{j=1}^{n} p_j(x)  \delta_{j}(\lambda), \; \forall x \in X$,  for any $f \in C(X,\R)$. Very general conditions for the existence of the invariant measure for such IFS are given in \cite{barnsley1988invariant}.
See also \cite{fan1999iterated}.

We would like to mention that  the compactness assumption on $\Lambda$ is essential for our techniques to work. There is a whole area dedicated to the case where $\Lambda$ is given by a countable set, see the pioneering work of   Urba\'{n}ski~\cite{Urb96} on families of conformal contractions and similar papers from the following years for details.

The case where $\Lambda$ is a more general space (a compact or even measurable set) was considered by Stenflo~\cite{stenflo2002uniqueness}, Arbieto~\cite{Arbieto2016WeaklyHyperbolic} and Mengue~\cite{MO}.

Stenflo~\cite{stenflo2002uniqueness}
considered a IFS $(X, \tau, \mu)$ for a fixed $\mu$ over an arbitrary measurable index space $\Lambda$. He uses a  slightly different approach, supposing that the iterations from $Z_0 \in X$ are $Z_{j+1}=\tau_{I_j}(Z_{j})$, controlled by a sequence of i.i.d variables $\{I_j \in \Lambda\}_{j \in \mathbb{N}}$, with a common distribution $\mu$,
which essentially means we have a IFSm  with a single probability $q_x=\mu, \forall x \in X$.
The main goal of Stenflo~\cite{stenflo2002uniqueness} is to establish, when $B_{q}$ is Feller, the existence of  an unique attracting invariant measure $\pi$.
Other interesting results,  including some results on stochastic stability (see section \ref{sec:eqmeasdependscontinuously}), for the somewhat simpler case where the $\Lambda$ is countable, are contained in Mendivil~\cite{mendivil1998generalization}, but also in the case we have a single probability  $q=\mu$.

Arbieto~\cite{Arbieto2016WeaklyHyperbolic} also consider a IFS $(X, \tau, \mu)$ for a compact space $\Lambda$, and again supposing a unique probability $q=\mu$, proves the existence of the  topological attractor, as well as the existence of invariant measures, under what he calls the weakly hyperbolic hypothesis on the IFS (see also \cite{Edalat}). He also proves an interesting result, concerning an ergodic theorem for the IFS, generalizing previous results by Elton~\cite{elton1987ergodic}, and also deals with the Chaos Game Theorem (see for example \cite{Arbieto2016WeaklyHyperbolic}).

Mengue~\cite{MO} considered two uniformly contractive IFSs over compact metric spaces with a compact sets of indices and established the duality for holonomic measures maximizing a cost function $C$. The main improvement there was to consider a family of measures of the form $q_x=e^{C(\lambda, x)}d\alpha(\lambda)$ becoming a true IFSm. A similar construction was already known from Lopes~\cite{lopes2015entropy}, for the particular case where the IFS is formed by the inverse branches of the shift map on the product of compact metric spaces.

We also mention the recent work of Mierlus–Mazilu~\cite{MN24} which studied IFS with measures $(T,\omega, q)$ in a compact space of parameters $[a,b] \in \mathbb{R}$, $\omega_\alpha: T \to T, \; \alpha \in [a,b]$, where $(T,d)$ is a compact metric space. It is assumed that each $\omega_\alpha$ is a $r_\alpha$-Lipschitz contraction and $\sup_{\alpha\in [a,b]} r_\alpha <1$. In this case, the measures $q$ are in the form $q_x=p(\alpha)d\nu(\alpha)$ (not depending on $x\in T$). Under this hypothesis they proved that there exists the attractor, the invariant measure and its support  is the attractor. After, they do analyze the behavior of sequences for theses IFSs, a kind of stability result.  On one hand no continuity with respect to $\alpha$ is required, however the measures and  spaces involved are very particular.

Note that, in the previous references Stenflo, Mendivil, and in Arbieto, the parameter space $\Lambda$ is more general, but  the probability used to define which map is used to iterate the process is always fixed (independent of $x \in X$).

Our goal in this paper is to establish the Hutchinson-Barnsley theory
and also some results on stochastic stability in the case the parameter space $\Lambda$ is a compact metric space, and the family of probabilities $q=\{q_x\}_{x \in X}$ admits different probabilities for each $x \in X$.
If $\mathcal{R}=(X, \tau, q)$ is an IFSm,  $F_{\mathcal{R}}$ is the Hutchinson-Barnsley operator, and
$\mcal{L}_{q}$ is the Markov operator, then
under mild assumptions we will prove there exists a unique compact set $A_{\mathcal{R}}$ called fractal attractor and an unique probability $\mu_{\mathcal{R}}$ called invariant or Hutchinson measure, such that
\begin{enumerate}
	\item $F_{\mathcal{R}}(A_{\mathcal{R}})=A_{\mathcal{R}}$ and for any nonempty compact set $B \subseteq X$ we have $F_{\mathcal{R}}^{k}(B) \to A_{\mathcal{R}}$, w.r.t. the Hausdorff distance;
	\item $\mcal{L}_{  q}(\mu_{\mathcal{R}})=\mu_{\mathcal{R}}$ and for any probability $\nu \subseteq \mathcal{P}(X)$ we have $\mcal{L}_{  q}^{k}(\nu) \to \mu_{\mathcal{R}}$, w.r.t. the Monge-Kantorovich distance;
	\item $\supp(\mu_{\mathcal{R}})=A_{\mathcal{R}}$.
\end{enumerate}
We will also prove theorem \ref{thm: continuous dependence measure} which shows the stochastic stability of the invariant measure with respect to perturbations of the family $Q$:
consider, for each $n \in \mathbb{N}$, an IFSp  $\mathcal{R}^{(n)} :=(X, \tau, q^{(n)})$  satisfying appropriate conditions.
Additionally assume the sequence $ q^{(n)}_{x} \to  q^{*}_{x}$ with respect to the distance $d_{MK}$ in $\mathcal{P}(\Lambda)$, uniformly in $x \in X$.
Let $\mu^{(n)}$ be the unique invariant measure for $\mathcal{R}^{(n)}$. Then we will prove that there exists $\mu^{*}$ a unique invariant measure for $\mathcal{R}^{*} :=(X, \tau, q^{*})$, and $\mu^{(n)}  \to  \mu^{*}$ with respect to the distance $d_{MK}$ in $\mathcal{P}(X)$.

It is worth to mention a previous experience with IFSm,
 in \cite{Brasil2022}, where the parameter space $\Lambda$ is a compact metric space, and the family of probabilities $q=\{q_x\}_{x \in X}$ admits different probabilities. However, in \cite{Brasil2022} we considered only the thermodynamical formalism for IFSm. We had not worked on the Hutchinson-Barnsley theory for these IFSm. The goal in \cite{Brasil2022} was to study the thermodynamical formalism for the IFSm, which comprises good definitions for transfer operators, invariant measures, entropy, pressure, equilibrium measures, and a  variational principle. We also used  these tools to characterize the solutions of the ergodic optimization problem. Other approaches for IFS thermodynamic formalism were developed by \cite{simon2001invariant,mauldin2000parabolic,hanus2002thermodynamic,lopes2009entropy,kaenmaki2004natural, lopes2015entropy,ElismarJairo24}  and many others. See the introduction of \cite{Brasil2022} for a more detailed description.

The structure of the paper is the following:
In Section~\ref{sec:IFSs on compact spaces with a compact set of maps} we present the basic definitions on IFS with probabilities, including the Monge-Kantorovich distance, the transfer and Markov operators, the fractal attractor and invariant measures, and state the main results of the papers which relates all these objects.
In Section~\ref{sec:statpointofview} we present the statistical point of view, where we present the Markov process defined by the IFSm, and prove that the Markov operator indeed gives the distribution of such process, proving that its fixed point, the invariant measure, is in fact the stationary distribution of the Markov Process.
In Section~\ref{sec:hyp} we present the hypotheses under which our results will be proved, and present some examples that fit our hypotheses. We will work thorough the text with essentially two sets of hypotheses, the second more technical but with more applications. In fact the second set implies the first, and we choose to keep both sets here, as the proofs are less technical in the first set of hypotheses.
 In Section~\ref{sec:existence-of-the-attractor} we have the same situation: the existence of the fractal attractor can be proved under these two sets of hypotheses.
In Section~\ref{sec:Existence of the invariant measure} we prove the existence of the invariant measure under the two different sets of hypothesis, the simpler set which provides a more direct proof and the more technical set, where we assume weaker hypotheses. In Section~\ref{sec:support} we prove that the support of the invariant measure coincides with the fractal attractor, under the weaker hypotheses. In Section~\ref{sec:eqmeasdependscontinuously} we prove that the invariant measure depends continuously on the IFSm.
Finally, in Section~\ref{sec:examples} we provide some examples to illustrate the application of our results.

\section{IFSs on compact spaces with a compact set of maps}\label{sec:IFSs on compact spaces with a compact set of maps}

Consider a compact metric space $(X,d)$.
Let $C(X, \mathbb{R})$ be the set of continuous functions from $(X,d)$ to $\mathbb{R}$, and
${\rm Lip}_{1}(X)$ be the subset of $C(X, \mathbb{R})$ of Lipschitz functions with Lipschitz constant less or equal to $1$.

We recall that, for $\mathcal{P}(X)$ the set of regular probabilities over the Borel sigma algebra of  $(X, d)$, there exists a metric called the Hutchinson distance or Monge-Kantorovich (or  Wasserstein-1) distance defined by
$$d_{MK}(\mu,\nu)=\sup_{f \in {\rm Lip}_{1}(X)}\left\{ \int_{X}f d\mu - \int_{X}f d\nu\right\},$$
for any  $\mu, \nu \in \mathcal{P}(X)$. Since $(X,d)$ is compact, the topology induced by the metric is equivalent to the weak-* topology.

Now consider other compact set $(\Lambda, d_{\Lambda})$, which will be called the set of parameters. We will also consider the product space $(X \times \Lambda, d_{X \times \Lambda})$ where
$$ d_{X \times \Lambda}((x_1,\lambda_1),(x_2, \lambda_2))= \max\{d_X(x_1,x_2),d_{\Lambda}(\lambda_1, \lambda_2)\}.$$
The next definition appears in \cite{lewellen1993self} and \cite{mendivil1998generalization}. See also Dumitru~\cite{Dumitru2013AttractorsOI} for the Hutchinson-Barnsley theory for such infinite systems or Lukawska~\cite{lukawska_jachymski_2005} for infinite countable IFSs.
\begin{definition} \label{def:IFS}
  An IFS is a map $\tau:\Lambda \times X \to X$ which is continuous. Sometimes we denote the maps as $\tau(\lambda,x)=\tau_{\lambda}(x)$, to simplify the notation.
\end{definition}
\begin{notation}
  If there is no risk of confusion then we can omit the space of indices. We denote any IFS by $\mathcal{R}=(X, \tau)$ (compact notation),  where $\tau=\tau_{\lambda}$ and $\lambda \in \Lambda$.  Otherwise, we write explicitly $\mathcal{R}=(X, (\tau_{\lambda})_{\lambda\in \Lambda})$.
\end{notation}

Let $\mathcal{K}^{*}(X)$ be the set of nonempty compact parts of $X$.  If  $h$ is the Hausdorff distance, the fact that $X$ is compact implies that $(\mathcal{K}^{*}(X), h)$ is a compact metric space. (If $(X,d)$ was only complete, then $(\mathcal{K}^{*}(X), h)$ would be only complete.)
\begin{definition}
For each  IFS $\mathcal{R}=(X, \tau)$ we define the fractal operator by $F_{\mathcal{R}}: \mathcal{K}^{*}(X) \to \mathcal{K}^{*}(X)$ by
$$F_{\mathcal{R}}(B):= \tau(\Lambda \times B) =
\cup_{\lambda \in \Lambda} \tau_{\lambda}(B),$$
for any $B \in \mathcal{K}^{*}(X)$.
\end{definition}
The continuity of the map  $\tau:\Lambda \times X \to X$ and the compactness of $\Lambda$ imply that $F_{\mathcal{R}}(B)$ is indeed a compact subset of $X$, when $B$ is compact.

Let $\mathcal{P}(\Lambda)$ be the set of regular probabilities over the Borel sigma algebra of  $(\Lambda, d_{\Lambda})$.
We will also consider in $\mathcal{P}(\Lambda)$ the Monge-Kantorovich distance.
\begin{definition} \label{def:IFS with measures}
  An IFS with measures, IFSm for short, is an IFS $\tau:\Lambda \times X \to X$ endowed with a continuous family of measures $q: X \to \mathcal{P}(\Lambda)$.
\end{definition}
\begin{notation}
  If there is no risk of confusion then we can omit both the indices and the measure dependence at the points. We denote any IFSm by $\mathcal{R}=(X, \tau, q)$ (compact notation),  where $\lambda \in \Lambda$ and $q: X \to \mathcal{P}(\Lambda)$. Otherwise, we write explicitly $\mathcal{R}=(X, (\tau_{\lambda})_{\lambda\in \Lambda}, (q_x)_{x \in X})$.
\end{notation}
In the subsection \ref{sec:statpointofview} we will see how an IFSm defines naturally an interesting Markov process on $X$.
\begin{definition}\label{def:transfer operator}
   To each IFSm we assign a transfer operator  $B_{q}: C(X) \to C(X)$ defined by
   \begin{equation}\label{eq:transfer operator}
      B_{q}(f)(x):=\int_{\Lambda} f(\tau(\lambda,x)) dq_{x}(\lambda)
   \end{equation}
   for any $f \in C(X)$.
\end{definition}

By duality, $B_{q}$ induces a new operator in $\mathcal{P}(X)$:
\begin{definition}\label{def:Markov operator}
   To each IFSm we assign a Markov operator  $T_q:=B_{q}^*: \mathcal{P}(X) \to \mathcal{P}(X)$ defined by duality, for any $\mu \in \mathcal{P}(X)$ by
   \begin{equation}\label{eq:Markov operator}
     \int_{X} f(x) d T_{q}(\mu):=\int_{X}  B_{q}(f)(x) d\mu(x):=\int_{X} \int_{\Lambda} f(\tau(\lambda,x)) dq_{x}(\lambda) d\mu(x)
   \end{equation}
   for any $f \in C(X)$. A probability $\mu \in \mathcal{P}(X)$ is called invariant (with respect to the IFSm) if $T_{q}(\mu)=\mu$.
\end{definition}
An invariant measure is also called a  Hutchinson measure, which reflects its importance in the probabilistic or in the geometrical aspects of the theory.

Let $F_{\mathcal{R}}^{k}$ and $T_q^k$ denote respectively the $k-$th iterates of the Hutchinson and Markov operators.
The next theorem summarizes the expected results for a reasonable IFSm theory, called Hutchinson-Barnsley theory.

\begin{theorem}\label{th:main-tobeproved}
  Let $\mathcal{R}=(X, \tau, q)$ be a normalized IFSm,  under mild assumptions there exists a unique compact set $A_{\mathcal{R}}$ called fractal attractor and an unique probability $\mu_{\mathcal{R}}$ called invariant or Hutchinson measure, such that
\begin{enumerate}
  \item $F_{\mathcal{R}}(A_{\mathcal{R}})=A_{\mathcal{R}}$ and for any nonempty compact set $B \subseteq X$ we have $F_{\mathcal{R}}^{k}(B) \to A_{\mathcal{R}}$, w.r.t. the Hausdorff distance;
  \item $T_{q}(\mu_{\mathcal{R}})=\mu_{\mathcal{R}}$ and for any probability $\nu \subseteq \mathcal{P}(X)$ we have $T_{q}^{k}(\nu) \to \mu_{\mathcal{R}}$, w.r.t. the Monge-Kantorovich distance;
  \item $\supp(\mu_{\mathcal{R}})=A_{\mathcal{R}}$.
\end{enumerate}
\end{theorem}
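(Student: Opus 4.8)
The plan is to derive all three conclusions from the Banach fixed point theorem applied on two complete (indeed compact) metric spaces: $(\mathcal{K}^{*}(X), h)$ for the attractor and $(\mathcal{P}(X), d_{MK})$ for the invariant measure. Everything reduces to two contraction estimates, and the interaction between the contractivity of the maps $\tau_\lambda$ and the regularity of the family $q=\{q_x\}_{x\in X}$ is where the genuine work lies. I will read the ``mild assumptions'' in the simplest form: each $\tau_\lambda$ is a contraction with ratio $s<1$ uniform in $\lambda$, the map $\tau$ is jointly Lipschitz (say with constant $L$), and $x\mapsto q_x$ is Lipschitz into $(\mathcal{P}(\Lambda), d_{MK})$.

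First I would establish conclusion (1). Using that the Hausdorff distance is compatible with unions and that $h(\tau_\lambda(B), \tau_\lambda(C)) \le s\, h(B,C)$ for every $\lambda$, I would obtain $h(F_{\mathcal{R}}(B), F_{\mathcal{R}}(C)) \le s\, h(B,C)$; joint continuity of $\tau$ and compactness of $\Lambda$ guarantee $F_{\mathcal{R}}$ maps $\mathcal{K}^{*}(X)$ into itself. Since $(\mathcal{K}^{*}(X), h)$ is complete, Banach's theorem produces a unique fixed point $A_{\mathcal{R}}$ with $F_{\mathcal{R}}^{k}(B)\to A_{\mathcal{R}}$ for every $B$. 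This is the classical Hutchinson argument and I expect no obstacle here.

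The substance is conclusion (2). The key is to show $B_q$ sends ${\rm Lip}_1(X)$ into ${\rm Lip}_c(X)$ for some $c<1$ and then dualize. For $f\in{\rm Lip}_1(X)$ I would split
\begin{align*}
B_q(f)(x) - B_q(f)(y) &= \int_\Lambda \big(f(\tau_\lambda(x)) - f(\tau_\lambda(y))\big)\, dq_x(\lambda)\\
&\quad + \left(\int_\Lambda f(\tau_\lambda(y))\, dq_x(\lambda) - \int_\Lambda f(\tau_\lambda(y))\, dq_y(\lambda)\right).
\end{align*}
The first term is at most $s\, d(x,y)$ by contractivity of the maps; the second is controlled by the $d_{MK}$-variation of $x\mapsto q_x$, because $\lambda\mapsto f(\tau_\lambda(y))$ is Lipschitz on $\Lambda$ with constant at most $L$. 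Hence ${\rm Lip}(B_q f)\le s + L\,{\rm Lip}(q)=:c$, and the hypothesis of joint contractivity of maps and weights is precisely $c<1$. Then, since $c^{-1}B_q f\in{\rm Lip}_1(X)$ and $T_q=B_q^{*}$,
\[
d_{MK}(T_q\mu, T_q\nu) = \sup_{f\in{\rm Lip}_1(X)}\left(\int_X B_q f\, d\mu - \int_X B_q f\, d\nu\right) \le c\, d_{MK}(\mu,\nu),
\]
so $T_q$ is a $c$-contraction on the complete space $(\mathcal{P}(X), d_{MK})$ and Banach's theorem yields the unique attracting fixed point $\mu_{\mathcal{R}}$. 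I expect this to be the main obstacle: the second term in the splitting has no counterpart in the classical IFSp setting, and bounding it is exactly where the dependence $x\mapsto q_x$ --- the central difficulty flagged in the introduction --- enters.

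Finally, for conclusion (3) I would show $\supp(\mu_{\mathcal{R}})$ is a fixed point of $F_{\mathcal{R}}$ and then invoke uniqueness from (1). Writing $S=\supp(\mu_{\mathcal{R}})$ and using $T_q\mu_{\mathcal{R}}=\mu_{\mathcal{R}}$, I would identify $\supp(T_q\mu_{\mathcal{R}})$ with $\overline{\bigcup_{x\in S}\supp\big((\tau(\cdot,x))_{*}q_x\big)}$, where $(\tau(\cdot,x))_{*}q_x$ is the pushforward of $q_x$ under $\lambda\mapsto\tau_\lambda(x)$. Under the normalization that each $q_x$ has full support on $\Lambda$, the inner support equals $\{\tau_\lambda(x):\lambda\in\Lambda\}$, so the set above is $\overline{F_{\mathcal{R}}(S)}=F_{\mathcal{R}}(S)$ (already compact); thus $F_{\mathcal{R}}(S)=S$ and uniqueness forces $S=A_{\mathcal{R}}$. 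The delicate point is again the $x$-dependence: proving the support formula requires continuity of $x\mapsto q_x$ so that no mass is lost when the fibre measures are integrated against $\mu_{\mathcal{R}}$. Under the weaker (eventually contractive or weakly hyperbolic) hypotheses mentioned in the introduction, the contractions in (1) and (2) would be replaced by a compactness / Krylov--Bogolyubov argument for existence together with a shrinking-diameter argument for uniqueness and attraction, while the support argument in (3) is essentially unchanged.
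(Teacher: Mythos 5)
Your treatment of conclusions (1) and (2) coincides with the paper's: the Hutchinson contraction estimate for $F_{\mathcal{R}}$ under uniform contractivity of the $\tau_\lambda$, and, for the measure, exactly the same two-term splitting of $B_q(f)(x)-B_q(f)(y)$ (this is the paper's Lemma \ref{le:MarkovisLip}), yielding the Lipschitz constant $c=s+rt$ for $T_q$ and Banach's theorem under $s+rt<1$ (Theorem \ref{thm: unique invariant measure} and Corollary \ref{cor:existmedequil}). Where you genuinely diverge is conclusion (3). The paper proves the two inclusions separately: $\supp(\mu_{\mathcal{R}})\subseteq A_{\mathcal{R}}$ by testing against functions vanishing near $A_{\mathcal{R}}$ and using $\int f\,d\mu_{\mathcal{R}}=\int B_q^k(f)\,d\mu_{\mathcal{R}}$ together with $F_{\mathcal{R}}^k(X)\to A_{\mathcal{R}}$; and $A_{\mathcal{R}}\subseteq\supp(\mu_{\mathcal{R}})$ by a backward-orbit and shrinking-diameter construction that produces a cylinder $U\subseteq\Lambda^N$ of positive $q$-probability (via H4) whose images land in a prescribed neighborhood. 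You instead show that $S=\supp(\mu_{\mathcal{R}})$ is itself a fixed point of $F_{\mathcal{R}}$ — using the mixture representation $T_q\mu=\int_X(\tau(\cdot,x))_*q_x\,d\mu(x)$, H4, and uniqueness from part (1) — which is shorter and conceptually cleaner. The one step you should write out is the support-of-mixture identity $\supp(T_q\mu)=\overline{\bigcup_{x\in S}\supp\bigl((\tau(\cdot,x))_*q_x\bigr)}$: the nontrivial inclusion ($\supseteq$) needs the lower semicontinuity of $x\mapsto\bigl((\tau(\cdot,x))_*q_x\bigr)(V)$ for open $V$, which follows from H3 and the joint continuity of $\tau$ via the portmanteau theorem; once that is in place your argument is complete and valid under the same hypotheses (C1/CP1, H2, H3, H4) the paper uses. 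Your closing remark about Krylov--Bogolyubov in the eventually contractive case is not how the paper proceeds — it instead shows that the $M$-th iterate $T_q^M$ is a contraction and applies the standard iterate version of Banach's theorem — but that does not affect correctness.
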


In the next subsections, we will prove theorem \ref{th:main-tobeproved} and explain what exactly are the {\it mild conditions} of its statement
(see corollary \ref{cor:main-proved}).

\section{The statistical point of view}\label{sec:statpointofview}

Now we will define a stochastic process that help us understanding the importance of theorem \ref{th:main-tobeproved}.
Suppose we are given the IFSm $\mathcal{R}=(X, \tau, q)$.

\medskip We start by fixing $\mu_0 \in  \mathcal{P}(X)$;

\medskip Then we fix the initial position $Z_0 \in X$ according to the law

$Z_0 \sim \mu_0 \hspace{0.5cm} \mbox{(we randomly choose the initial position following the distribution $\mu_0$)};$

By recurrence, we fix, for any  $k \geq 0$,

$ \lambda_k \sim q_{Z_k}   \hspace{.5cm} \mbox{(we randomly choose the map to be used  in the $k+1$-th transition}$

	$\mbox{following the distribution $q_{Z_k}$),} $

$Z_{k+1}= \tau_{\lambda_k}(Z_k) \hspace{0.5cm} \mbox{(we calculate the k+1-th transition)}.$

\medskip

Now, let $\mu_n$ be the probability distribution of $Z_n$.
Under these conditions, we have two natural questions:

\medskip -
\textbf{Question 1:} is there $\mu_0 \in \mathcal{P}(X)$ which makes $\mu_n = \mu_0 $ for all $n\geq 1$ ? In other words, do there exists a stationary distribution $\mu_e$ for the process $Z_n$?

\medskip - \textbf{Question 2:} suppose the answer to question 1 is affirmative, i.e., there exists a stationary distribution  $\mu_e \in \mathcal{P}(X)$. Then, under which conditions can we prove that  $\mu_n \to \mu_e$ when $n \to \infty$, for any initial distribution $\mu_0 \in \mathcal{P}(X)$?

\medskip

The following result shows that the stationary distribution  $\mu_e$ is the fixed measure $\mu_{\mathcal{R}}$ given by the second item of theorem \ref{th:main-tobeproved} (also called invariant measure or Hutchinson measure).
\begin{theorem}\label{te:operadoratualizadistribuicao}
	If $\mu_k$ is the distribution of the random variable $Z_k$ defined above, then for all $k \geq 0$, we have the following relation between the distribution of $Z_k$ and $Z_{k+1}$:
	$$\mu_{k+1} = T_q(\mu_k).$$
\end{theorem}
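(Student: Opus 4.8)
The plan is to identify the two probability measures $\mu_{k+1}$ and $T_q(\mu_k)$ by testing them against continuous functions. Since $(X,d)$ is a compact metric space, the space $C(X,\R)$ is measure-determining on $\mathcal{P}(X)$ (this is the content of the weak-$*$ topology being Hausdorff, as recalled just after the definition of $d_{MK}$). Thus it suffices to prove that
\begin{equation*}
\int_X f(y)\, d\mu_{k+1}(y) = \int_X f(y)\, d T_q(\mu_k)(y)
\end{equation*}
for every $f \in C(X,\R)$.

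First I would rewrite the left-hand side using the defining property of the distribution $\mu_{k+1}$ of $Z_{k+1}$, namely $\int_X f\, d\mu_{k+1} = \mathbb{E}[f(Z_{k+1})]$, and then insert the recurrence $Z_{k+1}=\tau_{\lambda_k}(Z_k)$ to obtain $\mathbb{E}[f(\tau_{\lambda_k}(Z_k))]$. The core of the argument is a conditioning step: applying the tower property and conditioning on $Z_k$, and using that by construction the conditional law of $\lambda_k$ given $Z_k=x$ is precisely $q_x$, one gets
\begin{equation*}
\mathbb{E}\big[\, f(\tau_{\lambda_k}(Z_k)) \,\big|\, Z_k = x \,\big] = \int_\Lambda f(\tau(\lambda,x))\, dq_x(\lambda) = B_q(f)(x),
\end{equation*}
which is exactly the transfer operator of Definition \ref{def:transfer operator} evaluated at $x$. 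Taking expectation over $Z_k \sim \mu_k$ then yields $\mathbb{E}[f(Z_{k+1})] = \int_X B_q(f)(x)\, d\mu_k(x)$, and the defining duality of the Markov operator in Definition \ref{def:Markov operator} gives $\int_X B_q(f)\, d\mu_k = \int_X f\, d T_q(\mu_k)$. Chaining these equalities proves the claim.

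The hard part will be making the conditioning step fully rigorous, which requires first pinning down the joint law of the sequence $(Z_0, \lambda_0, Z_1, \lambda_1, \dots)$ on an appropriate probability space. I would construct the process via the one-step transition kernel $P$ given by $P(x, A) = q_x(\{\lambda \in \Lambda : \tau(\lambda, x) \in A\})$ for Borel $A \subseteq X$, check that $P$ is a genuine Markov kernel (measurability of $x \mapsto P(x,A)$ follows from the continuity of $q$ and $\tau$, equivalently from the fact that $B_q$ maps $C(X,\R)$ into $C(X,\R)$), and then invoke regular conditional probabilities to justify that the conditional distribution of $\lambda_k$ given $Z_k$ is $q_{Z_k}$. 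Once the joint law is in place, the tower property together with Fubini's theorem make the displayed identities routine, the continuity of $B_q(f)$ ensuring the integrals are well defined. I expect no difficulty beyond this measure-theoretic bookkeeping, since the computation itself is a direct unwinding of the definitions.
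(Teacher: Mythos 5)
Your proposal is correct and follows essentially the same route as the paper's proof: test $\mu_{k+1}$ against continuous functions, condition on $Z_k$ via the tower property to identify $E[f(Z_{k+1})\mid Z_k=x]$ with $B_q(f)(x)$, integrate against $\mu_k$, and invoke the duality defining $T_q$. The extra measure-theoretic scaffolding you describe (constructing the joint law via the transition kernel) is a sound way to make the conditioning rigorous, but it is not needed beyond what the paper already does.
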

\begin{proof}
	In order to characterize the distribution of $Z_{k+1}$ we just need to calculate the expectation of any continuous function of $Z_{k+1}$: We will do that by conditioning on $Z_k$: given $f \in C(X)$, we have
	$$E[f(Z_{k+1})] = E[E[f(Z_{k+1})|Z_k]]$$
	Now the distribution of $Z_k$ is given by $\mu_k$, and
	$$E[E[f(Z_{k+1})|Z_k]]= \int_X E[f(Z_{k+1})|Z_k=x] d\mu_k(x)$$
	Now, given that $Z_k=x$, we have that $Z_{k+1}=\tau_{\lambda}(x)$ where $\lambda$ is chosen according to the probability $q_x(\cdot)$.
	Therefore, we have
	$$E[f(Z_{k+1})|Z_k=x]= \int_{\Lambda}f(\tau_{\lambda}(x)) dq_x(\lambda).$$
	Combining the last equations we have
	$$E[f(Z_{k+1})] =  \int_X   \int_{\Lambda}f(\tau_{\lambda}(x)) dq_x(\lambda)  d\mu_k(x)= \int_X f(x) d T_q(\mu_k).$$
\end{proof}

Also, the following informal argument shows the importance of items 1 and 3 of theorem \ref{th:main-tobeproved}:
If $C_0=X$ e $C_{k+1}=F_{\mathcal{R}}(C_k)$ for all $k \geq 0$, the support of $\mu_k$ is contained in  $C_k$. Therefore, it is natural that the support of the stationary distribution $\mu_{\mathcal{R}}$  would be contained in the fractal attractor $A_{\mathcal{R}}$ (and in what follows we will prove in fact that they are equal, see theorem \ref{te:supp equal attrac}).

Finally, we give an interesting interpretation of the transfer operator in terms of conditional expectations of the stochastic process $\{Z_n\}$: if $f \in C(X)$, and $n \geq 0$, we have:
$$ B_q(f)(x) = E [f(Z_{n+1})|Z_n=x].$$

\section{Regularity hypotheses }\label{sec:hyp}

We will now present some hypotheses under which our results will be proved.
Let us fix an IFSm  $\mathcal{R} :=(X, \tau, q)$.
We begin by the following:

\begin{enumerate}
	\item[M1.] there exists $s >0$ such that, for any pair $x$ and $y$ in $X$, we have $$ \int_{\Lambda} \left| f(\tau(\lambda,x))-f(\tau(\lambda,y))\right|  dq_{x}(\lambda)  < s \; d(x,y)$$ for any map $f \in Lip_1(X)$;
	\item[H2.]  there exists $r\geq 0$, such that, the map $\tau(\cdot, x) \in {\rm Lip}_{r}(\Lambda)$, uniformly with respect to $x$;
	\item[H3.] there exists $t\geq 0$, such that,  the map $q: X \to \mathcal{P}(\Lambda)$ is in ${\rm Lip}_{t}(X)$;
	\item[H4.] $q_x(A)>0$ for any open subset $A \subseteq \Lambda$ and $x \in X$.
\end{enumerate}
Note that H3 can be stated as: for any fixed $x \in X$, the map
$$\begin{cases}
	\tau(\cdot, x): \Lambda \to X \\ \lambda \to \tau(\lambda, x)
\end{cases}$$
is $r$-Lipschitz.

Conditions M1, H2 and H4 are fairly general assumptions.
In the end of this section we will show three examples satisfying condition H3.
\begin{remark} Before going further, let us make a brief remark on the meaning of the word ``uniformly" as it appears in condition H2. When we say that 
	 the map $\tau(\cdot, x) \in {\rm Lip}_{r}(\Lambda)$, uniformly with respect to $X$, we mean that there exists $r \geq 0$ such that 
	$$d(\tau(\lambda_1,x),\tau(\lambda_2,x)) \leq  r d_{\Lambda}(\lambda_1,\lambda_2),$$ 
	for any pair  $\lambda_1$ and $\lambda_2$ in $\Lambda$ and any $x \in X$. Note that $r$ does not depend on $x$.  We will use the word "uniformly" also in conditions C1, W1, and CP1, with an analogous interpretation, meaning that the Lipschitz constant ($s$ or $1$) does not depend on $\lambda$ or $\lambda^N$.
\end{remark}  
Note that M1 holds for example if the following stronger assumption is assumed:
\begin{enumerate}
	\item[C1.]  the map $\tau(\lambda, \cdot) \in {\rm Lip}_{s}(X)$, with $s <1$, uniformly with respect to $\lambda$.
\end{enumerate}

On the other side, the following assumption is weaker that C1, and will be needed for some of our further results:
\begin{enumerate}
	\item[W1.]  the map $\tau(\lambda, \cdot) \in {\rm Lip}_{1}(X)$, uniformly with respect to $\lambda$.
\end{enumerate}

We now consider some assumptions weaker than C1 and M1, that will be enough for some of our results. We begin by denoting the  composition process as follows:
for any $\lambda^k=(\lambda_{0},\lambda_{1},\lambda_{2}, \ldots \lambda_{k-1}) \in \Lambda^{k}$, let
$$\tau_{\lambda^{k}}(x):=(\tau_{\lambda_{k-1}} \circ \cdots   \circ \tau_{\lambda_{0}})(x),$$
where $x\in X$ and $k \geq 1$, and $\tau_{\lambda^{0}}(x)=x$.
Sometimes we will use the notation $\tau(\lambda^{k},x)=\tau_{\lambda^{k}}(x)$.

Some of our results will remain valid if the uniform contraction holds for composition of maps:
We have the following assumption, weaker than C1:
\begin{enumerate}
	\item[CP1.]  there is $M \geq 1$ and $0<s <1$ such that the map
	$\tau_{\lambda^{M}}(\cdot) \in {\rm Lip}_{s}(X)$, uniformly with respect to $\lambda^M$.
\end{enumerate}

We also have the following assumption, weaker than M1:
\begin{enumerate}
	\item[MP1.] there exists $s >0$ and is $M \geq 1$ such that, for any pair $x$ and $y$ in $X$, we have $$ \int_{\Lambda^M} \left| f(\tau(\lambda^M,x))-f(\tau(\lambda^M,y))\right|
	d P^q_{M,x}(\lambda^M)
  < s d(x,y).$$
\end{enumerate}
where we define the measure $P_{M,x}^{q}$ on $\Lambda^{N}$
by
$$d P_{M,x}^{q}\left(\lambda_{0}, \ldots, \lambda_{M-1}\right)  =  dq_{\tau_{\lambda^{M-1}}(x)} (\lambda_{M-1}) \hdots
dq_{\tau_{\lambda_1}(\tau_{\lambda_0}(x))} (\lambda_2)
dq_{\tau_{\lambda_0}(x)} (\lambda_1)
dq_{x}(\lambda_0).$$
Note that, as C1 $\Longrightarrow$ M1, we also have  CP1 $\Longrightarrow$ MP1.

Also note that, with the notation just defined, the $M$-th iterate of the transfer operator is given by

\begin{equation}\label{eq:iterado-operador}
	B^M_q(f)(x)=  \int_{\Lambda^M}  f(\tau(\lambda^M,x))
	d P^q_{M,x}(\lambda^M).
\end{equation}

As a curiosity, we give an interpretation of the iterates of the transfer operator in terms of conditional expectations of the stochastic process $\{Z_n\}$ defined in Section~\ref{sec:statpointofview}: if $f \in C(X)$, $n \geq 0$, and $k \geq 1$, equation \eqref{eq:iterado-operador} help us to see that:
$$ B_q^k(f)(x) = E [f(Z_{n+k})|Z_n=x].$$

\section{Existence of the attractor}\label{sec:existence-of-the-attractor}
The next theorem can be founded in \cite[Theorem 3.2]{lewellen1993self} (also \cite{mendivil1998generalization}) but we provide a different and simpler proof.
\begin{theorem}\label{thm: unique invariant set}
		Consider an IFSm  $\mathcal{R} :=(X, \tau)$  satisfying C1.
		Then, $F_{\mathcal{R}}$ is $s$-Lipschitz contractive. In particular, there exists a unique invariant set $A_{\mathcal{R}}$.
\end{theorem}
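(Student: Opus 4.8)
The plan is to show that $F_{\mathcal{R}}$ is an $s$-contraction on the complete metric space $(\mathcal{K}^*(X), h)$ and then to invoke the Banach fixed point theorem, which immediately yields a unique set $A_{\mathcal{R}}$ with $F_{\mathcal{R}}(A_{\mathcal{R}}) = A_{\mathcal{R}}$. Since $X$ is compact, $(\mathcal{K}^*(X), h)$ is compact, hence complete, and the observation following the definition of $F_{\mathcal{R}}$ guarantees that $F_{\mathcal{R}}(B) \in \mathcal{K}^*(X)$ whenever $B \in \mathcal{K}^*(X)$, so the operator is well defined and Banach's theorem applies.

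The argument rests on two elementary properties of the Hausdorff distance, which I would state and verify first. Writing $\vec{h}(A, A') = \sup_{a \in A} \inf_{a' \in A'} d(a, a')$ for the directed distance, so that $h(A,A') = \max\{\vec{h}(A, A'), \vec{h}(A', A)\}$, the first property is that a single $s$-Lipschitz self-map contracts the Hausdorff distance by its constant: if $g: X \to X$ is $s$-Lipschitz then $h(g(B), g(C)) \leq s\, h(B,C)$ for all $B, C \in \mathcal{K}^*(X)$. This follows because, using compactness to realize the relevant infima, for each $b \in B$ one picks $c \in C$ with $d(b,c) \leq \vec{h}(B,C)$, whence $d(g(b), g(c)) \leq s\, d(b,c) \leq s\, \vec{h}(B, C)$; taking suprema and symmetrizing gives the claim. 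The second property is the subadditivity of $h$ under unions: for arbitrary families $\{A_\lambda\}$ and $\{A'_\lambda\}$ of nonempty sets one has $h\big(\overline{\bigcup_\lambda A_\lambda},\, \overline{\bigcup_\lambda A'_\lambda}\big) \leq \sup_\lambda h(A_\lambda, A'_\lambda)$, proved directly from the sup-inf formula, since any point of $\bigcup_\lambda A_\lambda$ lies in some $A_{\lambda_0}$ and is therefore within $\vec{h}(A_{\lambda_0}, A'_{\lambda_0})$ of $A'_{\lambda_0} \subseteq \bigcup_\lambda A'_\lambda$.

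With these in hand the contraction estimate is immediate. By hypothesis C1 each $\tau_\lambda = \tau(\lambda, \cdot)$ is $s$-Lipschitz with the \emph{same} constant $s < 1$, uniformly in $\lambda$. Combining the two properties, for $B, C \in \mathcal{K}^*(X)$,
$$h(F_{\mathcal{R}}(B), F_{\mathcal{R}}(C)) = h\Big(\bigcup_{\lambda \in \Lambda} \tau_\lambda(B),\, \bigcup_{\lambda \in \Lambda} \tau_\lambda(C)\Big) \leq \sup_{\lambda \in \Lambda} h(\tau_\lambda(B), \tau_\lambda(C)) \leq \sup_{\lambda \in \Lambda} s\, h(B, C) = s\, h(B, C),$$
so $F_{\mathcal{R}}$ is $s$-Lipschitz contractive.

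I do not expect a serious obstacle here; the single point requiring care is that $\Lambda$ may be uncountable, so the union defining $F_{\mathcal{R}}$ is in general infinite. This is exactly where the \emph{uniformity} in C1 is essential: it makes the bound $s\, h(B,C)$ independent of $\lambda$, so that passing to the supremum over $\lambda$ preserves the factor $s$. One should also keep in mind that $F_{\mathcal{R}}(B)$ must be genuinely compact before applying Banach's theorem, but this is precisely the content of the observation following the definition of $F_{\mathcal{R}}$, which uses continuity of $\tau$ together with compactness of $\Lambda$. Once contraction is established, the existence and uniqueness of the invariant set $A_{\mathcal{R}}$ is a direct application of the Banach fixed point theorem.
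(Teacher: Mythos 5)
Your proposal is correct and follows essentially the same route as the paper: bound $h(F_{\mathcal{R}}(B),F_{\mathcal{R}}(C))$ by $\sup_{\lambda}h(\tau_{\lambda}(B),\tau_{\lambda}(C))$ via subadditivity of the Hausdorff distance under unions, use the uniform Lipschitz constant from C1 to get the factor $s$, and conclude with the Banach fixed point theorem. You merely spell out the two Hausdorff-distance lemmas that the paper uses without proof, which is a welcome but not substantively different elaboration.
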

\begin{proof}
	$$h(F_{\mathcal{R}}(B), F_{\mathcal{R}}(C))
	= h(\cup_{\lambda \in \Lambda} \tau_{\lambda}(B), \cup_{\lambda \in \Lambda} \tau_{\lambda}(C)) \leq  $$
	$$\sup_{\lambda} h(\tau_{\lambda}(B),\tau_{\lambda}(C)) \leq \sup_{\lambda} {\rm Lip}(\tau_{\lambda}) h(B,C)< s\; h(B,C),$$
	for any $B, C \in \mathcal{K}^{*}(X)$. The result follows from the Banach contraction theorem.
\end{proof}

An immediate consequence of Banach contraction theorem and Theorem~\ref{thm: unique invariant set} is the next result, which says the invariant set $A_{\mathcal{R}}$ is a topological attractor for the IFS (note that we have here a purely topological result, where the family of probabilities $q$ takes no role).

\begin{corollary}
	For any $B\in \mathcal{K}^{*}(X)$, we have $F_{\mathcal{R}}^{k}(B) \to A_{\mathcal{R}}$, w.r.t. the Hausdorff distance.
	More precisely,
	$$h(F_{\mathcal{R}}^{k}(B), A_{\mathcal{R}})< \frac{s^k }{1-s} h(F_{\mathcal{R}}(B), B),$$
	and we also have the following inequality, which in the classical case is stated as the Collage Theorem and corresponds to the case $k=0$:
	$$h(B, A_{\mathcal{R}})< \frac{h(F_{\mathcal{R}}(B), B)}{1-s}.$$
	Last inequality shows that, if we are able to choose $B$ such that
	$F_{\mathcal{R}}(B)$ is close to $B$, in the Hausdorff distance, than $B$ we will be close to the topological attractor.
\end{corollary}

\begin{remark}
	An  easy adaptation of the previous proof shows that the weaker hypothesis CP1 also implies the existence of an unique invariant set $A_{\mathcal{R}}$ which is also a topological attractor set, and given any nonempty compact set $B$, the iterates of $B$ under the Hutchinson operator converge exponentially to the attractor.
\end{remark}

\section{Existence of the invariant measure}\label{sec:Existence of the invariant measure}

From \cite{mendivil1998generalization} we know that in the case of IFSm where the family of measures $q: X \to \mathcal{P}(\Lambda)$ is constant, which means there exist an unique probability $p$ and $q_{x}=p \in \mathcal{P}(\Lambda)$ for all $x \in X$, than there exists an unique invariant probability $\mu_{\mathcal{R}}$ for the IFSm.

We will now extend this result in fairly greater generality:
\begin{theorem}\label{thm: unique invariant measure}
	Consider an IFSm  $\mathcal{R} :=(X, \tau, q)$  satisfying the  conditions M1, H2 and H3 above.
   Then $T_q$ is $s+ r \cdot t$-Lipschitz.
\end{theorem}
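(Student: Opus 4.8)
The plan is to establish the stronger pointwise statement that, for every $f \in {\rm Lip}_{1}(X)$, the image $B_{q}(f)$ lies in ${\rm Lip}_{s+r\cdot t}(X)$, and then to transfer this to $T_{q}$ by duality. To see why this suffices, note that by the definition of $T_{q}$ together with the dual formula for $d_{MK}$ on $\mathcal{P}(X)$, for any $\mu,\nu \in \mathcal{P}(X)$ and any $f \in {\rm Lip}_{1}(X)$ one has
$$\int_{X} f\, dT_{q}(\mu) - \int_{X} f\, dT_{q}(\nu) = \int_{X} B_{q}(f)\, d\mu - \int_{X} B_{q}(f)\, d\nu.$$
Once $B_{q}(f)$ is known to be $(s+r\cdot t)$-Lipschitz, the function $\tfrac{1}{s+r\cdot t}B_{q}(f)$ belongs to ${\rm Lip}_{1}(X)$ (the constant is positive since M1 gives $s>0$), so the right-hand side is at most $(s+r\cdot t)\,d_{MK}(\mu,\nu)$; taking the supremum over $f \in {\rm Lip}_{1}(X)$ on the left then gives $d_{MK}(T_{q}(\mu),T_{q}(\nu)) \leq (s+r\cdot t)\,d_{MK}(\mu,\nu)$, as required.

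For the Lipschitz bound on $B_{q}(f)$, I would fix $x,y \in X$ and $f \in {\rm Lip}_{1}(X)$ and write $B_{q}(f)(x)-B_{q}(f)(y) = I + II$ by inserting the mixed term $\int_{\Lambda} f(\tau(\lambda,y))\,dq_{x}(\lambda)$, where
$$I = \int_{\Lambda}\big[f(\tau(\lambda,x)) - f(\tau(\lambda,y))\big]\,dq_{x}(\lambda), \qquad II = \int_{\Lambda} f(\tau(\lambda,y))\,dq_{x}(\lambda) - \int_{\Lambda} f(\tau(\lambda,y))\,dq_{y}(\lambda).$$
The term $I$ is handled directly by hypothesis M1: since $f \in {\rm Lip}_{1}(X)$, we get $|I| \leq \int_{\Lambda}|f(\tau(\lambda,x))-f(\tau(\lambda,y))|\,dq_{x}(\lambda) < s\,d(x,y)$.

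For $II$, I would observe that the integrand $g(\lambda):=f(\tau(\lambda,y))$ is $r$-Lipschitz on $\Lambda$: by H2 the map $\tau(\cdot,y)$ is $r$-Lipschitz, and composing with the $1$-Lipschitz function $f$ gives $|g(\lambda_{1})-g(\lambda_{2})| \leq d(\tau(\lambda_{1},y),\tau(\lambda_{2},y)) \leq r\,d_{\Lambda}(\lambda_{1},\lambda_{2})$. Hence $\tfrac{1}{r}g \in {\rm Lip}_{1}(\Lambda)$ (and if $r=0$ then $g$ is constant, so $II=0$), and the dual formula for $d_{MK}$ on $\mathcal{P}(\Lambda)$, using that both $h$ and $-h$ range over ${\rm Lip}_{1}(\Lambda)$, gives $|II| \leq r\,d_{MK}(q_{x},q_{y})$. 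Finally H3 yields $d_{MK}(q_{x},q_{y}) \leq t\,d(x,y)$, so $|II| \leq r\cdot t\,d(x,y)$. Adding the two estimates produces $|B_{q}(f)(x)-B_{q}(f)(y)| \leq (s+r\cdot t)\,d(x,y)$, completing the key step.

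I do not anticipate a serious obstacle: the argument simply separates the two sources of variation in $B_{q}(f)$ — the displacement of the base point through the maps $\tau_{\lambda}$, absorbed by M1, and the displacement of the weighting measure $q_{x}$, absorbed by H2 together with H3 — and matches each to a single hypothesis. The only points needing minor care are the degenerate case $r=0$ in normalizing $g$, and the appeal to the symmetry of $d_{MK}$ in order to pass from the signed difference defining $II$ to an absolute-value bound.
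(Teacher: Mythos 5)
Your proposal is correct and follows essentially the same route as the paper: the paper's Lemma 6.2 proves exactly your pointwise claim that $B_{q}(f)\in{\rm Lip}_{s+r\cdot t}(X)$ via the same insertion of the mixed term $\int_{\Lambda}f(\tau(\lambda,y))\,dq_{x}(\lambda)$ and the same matching of $I$ to M1 and $II$ to H2 plus H3, and then transfers the bound to $T_{q}$ by the identical duality argument. Your explicit treatment of the degenerate case $r=0$ is a minor point of care that the paper glosses over, but it does not constitute a different approach.
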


In order to prove theorem  \ref{thm: unique invariant measure}, we will need the following
\begin{lemma}\label{le:MarkovisLip}
Consider an IFSm  $\mathcal{R} :=(X, \tau, q)$  satisfying the  conditions M1, H2 and H3 above.	There exist $c>0$ such that, for any $f\in {\rm Lip}_{1}(X)$, we have $B_{q}(f) \in {\rm Lip}_{c}(X)$.
\end{lemma}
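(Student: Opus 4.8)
The goal is to show that under M1, H2 and H3, the transfer operator maps $\mathrm{Lip}_1(X)$ into $\mathrm{Lip}_c(X)$ for some uniform constant $c$. Fix $f \in \mathrm{Lip}_1(X)$ and estimate $|B_q(f)(x) - B_q(f)(y)|$ for $x,y \in X$.

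\medskip\noindent\textbf{Plan of proof.} The natural approach is to split the difference $B_q(f)(x) - B_q(f)(y)$ into two pieces that isolate the two sources of variation: the dependence of the integrand $f(\tau(\lambda,\cdot))$ on the base point, and the dependence of the measure $q_{\cdot}$ on the base point. Concretely, I would write
$$B_q(f)(x) - B_q(f)(y) = \int_\Lambda f(\tau(\lambda,x))\, dq_x(\lambda) - \int_\Lambda f(\tau(\lambda,y))\, dq_y(\lambda),$$
and insert the intermediate term $\int_\Lambda f(\tau(\lambda,y))\, dq_x(\lambda)$, giving
$$B_q(f)(x) - B_q(f)(y) = \underbrace{\int_\Lambda \big[f(\tau(\lambda,x)) - f(\tau(\lambda,y))\big]\, dq_x(\lambda)}_{(\mathrm{I})} + \underbrace{\int_\Lambda f(\tau(\lambda,y))\, \big(dq_x(\lambda) - dq_y(\lambda)\big)}_{(\mathrm{II})}.$$

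\medskip\noindent\textbf{Bounding the two terms.} Term $(\mathrm{I})$ is controlled directly by hypothesis M1: since $f \in \mathrm{Lip}_1(X)$, M1 gives $|(\mathrm{I})| \le \int_\Lambda |f(\tau(\lambda,x)) - f(\tau(\lambda,y))|\, dq_x(\lambda) \le s\, d(x,y)$. For term $(\mathrm{II})$, the idea is to recognize it as a pairing of the function $\lambda \mapsto f(\tau(\lambda,y))$ against the signed measure $q_x - q_y$, which is exactly what the Monge--Kantorovich distance on $\mathcal{P}(\Lambda)$ estimates. By H2, the map $\lambda \mapsto \tau(\lambda,y)$ is $r$-Lipschitz (uniformly in $y$), and composing with $f \in \mathrm{Lip}_1(X)$ shows $g_y(\lambda) := f(\tau(\lambda,y))$ is $r$-Lipschitz on $\Lambda$; hence $g_y/r \in \mathrm{Lip}_1(\Lambda)$ (handling the degenerate case $r=0$ separately, where $(\mathrm{II})=0$). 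The duality definition of $d_{MK}$ then yields $|(\mathrm{II})| \le r\, d_{MK}(q_x, q_y)$, and H3 gives $d_{MK}(q_x,q_y) \le t\, d(x,y)$, so $|(\mathrm{II})| \le r\,t\,d(x,y)$. Combining, $|B_q(f)(x) - B_q(f)(y)| \le (s + r\,t)\, d(x,y)$, which proves the lemma with $c = s + r\,t$ (and incidentally identifies the constant appearing in Theorem \ref{thm: unique invariant measure}).

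\medskip\noindent\textbf{Main obstacle.} The only genuinely delicate point is term $(\mathrm{II})$ and the correct invocation of Monge--Kantorovich duality. One must verify that $g_y$ is indeed $r$-Lipschitz as a function on $(\Lambda, d_\Lambda)$ uniformly in $y$, and then argue that testing the signed measure $q_x - q_y$ against a Lipschitz function with constant $r$ costs a factor $r$ relative to the supremum over $\mathrm{Lip}_1(\Lambda)$ that defines $d_{MK}$; the rescaling $g_y/r$ makes this rigorous. It is also worth noting, for continuity of $B_q(f)$, that $B_q$ is already known to map $C(X)$ into $C(X)$ by Definition \ref{def:transfer operator}, so the Lipschitz estimate automatically gives $B_q(f) \in C(X)$; thus the conclusion $B_q(f) \in \mathrm{Lip}_c(X)$ follows. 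I would present this cleanly without belaboring the $r=0$ edge case, since it is trivial.
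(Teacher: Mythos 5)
Your proposal is correct and follows essentially the same route as the paper's proof: the same insertion of the intermediate term $\int_\Lambda f(\tau(\lambda,y))\,dq_x(\lambda)$, with M1 bounding the first piece by $s\,d(x,y)$ and H2 plus Monge--Kantorovich duality and H3 bounding the second by $r\,t\,d(x,y)$, yielding $c=s+rt$. Your explicit treatment of the rescaling $g_y/r$ and the $r=0$ case is a minor refinement the paper leaves implicit.
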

\begin{proof}
Let	$f\in {\rm Lip}_{1}(X)$.
Let us compare, for any $x, y \in X$ the difference
$$| B_{q}(f)(x) - B_{q}(f)(y)|= | \int_{\Lambda} f(\tau(\lambda,x)) dq_{x}(\lambda) - \int_{\Lambda} f(\tau(\lambda,y)) dq_{y}(\lambda)|= $$
$$= | \int_{\Lambda} f(\tau(\lambda,x))-f(\tau(\lambda,y)) + f(\tau(\lambda,y)) dq_{x}(\lambda) - \int_{\Lambda} f(\tau(\lambda,y)) dq_{y}(\lambda)|= $$
$$= | \int_{\Lambda} [f(\tau(\lambda,x))-f(\tau(\lambda,y))] dq_{x}(\lambda)  + \int_{\Lambda} f(\tau(\lambda,y)) dq_{x}(\lambda) - \int_{\Lambda} f(\tau(\lambda,y)) dq_{y}(\lambda)|\leq $$
$$\leq s  \cdot d(x,y) + r \cdot d_{MK}(q_{x},q_{y}) \leq (s+r \cdot t) d(x,y).$$
Now take $c=s+r \cdot t$.

\end{proof}

\begin{proof}[ of the Theorem~\ref{thm: unique invariant measure}]
	Consider $\mu, \nu \in \mathcal{P}(X)$. For any $f\in {\rm Lip}_{1}(X)$, lemma \ref{le:MarkovisLip} implies $c^{-1}B_{q}(f) \in {\rm Lip}_{1}(X)$. Then we have
   $$d_{MK}(T_{q}(\mu),T_{q}(\nu))=\sup_{f \in {\rm Lip}_{1}(X)} \int_{X}f dT_{q}(\mu) - \int_{X}f dT_{q}(\nu)= $$ $$= c \sup_{f \in {\rm Lip}_{1}(X)} \int_{X} c^{-1}B_{q}(f) d\mu - \int_{X} c^{-1}B_{q}(f) d\nu \leq c d_{MK}(\mu,\nu).$$
\end{proof}

  As a consequence of theorem \ref{thm: unique invariant measure} and the Banach contraction theorem, we have

\begin{corollary}\label{cor:existmedequil} Under conditions M1, H2, and H3, and supposing $s+ r \cdot t<1$, there exists a unique invariant probability measure $\mu_{\mathcal{R}}$. Also, for any regular probability measure $\mu_0$, we have $$T_{q}^{k}(\mu_0) \to \mu_{\mathcal{R}},$$ w.r.t. the Monge-Kantorovich distance. We also have exponential convergence under the Monge-Kantorovich distance.
\end{corollary}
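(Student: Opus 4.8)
The plan is to deduce the corollary directly from the Banach contraction theorem applied to the Markov operator $T_q$ on the metric space $(\mathcal{P}(X), d_{MK})$. The essential analytic input has already been isolated in Theorem~\ref{thm: unique invariant measure}, which shows that $T_q$ is $(s + r \cdot t)$-Lipschitz with respect to $d_{MK}$; under the additional hypothesis $s + r \cdot t < 1$ this turns $T_q$ into a genuine contraction, so everything else is soft.

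Before invoking Banach, I would record that $(\mathcal{P}(X), d_{MK})$ is a complete metric space. Since $(X,d)$ is compact, the set $\mathcal{P}(X)$ of regular Borel probabilities is compact in the weak-* topology (by Prokhorov, or Banach--Alaoglu applied to $C(X)^{*}$), and, as already observed in Section~\ref{sec:IFSs on compact spaces with a compact set of maps}, the topology induced by $d_{MK}$ coincides with the weak-* topology on $\mathcal{P}(X)$. Hence $(\mathcal{P}(X), d_{MK})$ is compact, and in particular complete, which is precisely the ambient hypothesis required by the contraction theorem.

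With completeness of the domain and the contraction estimate of Theorem~\ref{thm: unique invariant measure} in hand, the Banach fixed point theorem yields at once a unique fixed point $\mu_{\mathcal{R}} \in \mathcal{P}(X)$, that is a unique invariant measure satisfying $T_q(\mu_{\mathcal{R}}) = \mu_{\mathcal{R}}$, together with the convergence $T_q^{k}(\mu_0) \to \mu_{\mathcal{R}}$ for every initial $\mu_0 \in \mathcal{P}(X)$. The exponential rate is then the standard a priori estimate for iterates of a contraction,
$$d_{MK}(T_q^{k}(\mu_0), \mu_{\mathcal{R}}) \leq \frac{(s + r \cdot t)^{k}}{1 - (s + r \cdot t)} \, d_{MK}(T_q(\mu_0), \mu_0),$$
which makes the exponential convergence explicit. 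I do not expect any genuine obstacle in this argument: all of the work has been pushed into the Lipschitz bound of Theorem~\ref{thm: unique invariant measure}, so the only point deserving a careful word is the completeness (indeed compactness) of $(\mathcal{P}(X), d_{MK})$, which is a direct consequence of the compactness of $X$.
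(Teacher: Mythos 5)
Your proposal is correct and follows exactly the route the paper takes: the paper presents this corollary as an immediate consequence of Theorem~\ref{thm: unique invariant measure} and the Banach contraction theorem, which is precisely your argument. Your additional remark on the completeness (indeed compactness) of $(\mathcal{P}(X), d_{MK})$ is a detail the paper leaves implicit, and it is correctly justified.
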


Now we will prove that the weaker hypothesis MP1, together with W1, H2 and H3, are sufficient conditions for theorem \ref{thm: unique invariant measure} and  corollary \ref{cor:existmedequil}.

\begin{theorem}\label{thm:GEN unique invariant measure}
	Consider an IFSm  $\mathcal{R} :=(X, \tau, q)$  satisfying the  conditions W1, MP1, H2 and H3.
	Then $T^M_q$ is $s+ r \cdot t$-Lipschitz.
\end{theorem}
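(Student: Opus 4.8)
The plan is to follow exactly the template used for Theorem~\ref{thm: unique invariant measure}: by the duality computation in the proof of that theorem, it suffices to establish the $M$-step analogue of Lemma~\ref{le:MarkovisLip}, namely that under W1, MP1, H2 and H3 the iterated transfer operator sends ${\rm Lip}_1(X)$ into ${\rm Lip}_{s+rt}(X)$. Once this is known, for $f \in {\rm Lip}_1(X)$ the function $c^{-1}B_q^M(f)$ lies in ${\rm Lip}_1(X)$ with $c = s+rt$, and the identical chain $d_{MK}(T_q^M(\mu),T_q^M(\nu)) = c\,\sup_{f}\big(\int c^{-1}B_q^M(f)\,d\mu - \int c^{-1}B_q^M(f)\,d\nu\big) \le c\,d_{MK}(\mu,\nu)$ yields the statement. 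So the entire content is the Lipschitz estimate for $B_q^M$.

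First I would use the explicit formula \eqref{eq:iterado-operador} and, for fixed $x,y \in X$ and $f \in {\rm Lip}_1(X)$, split $B_q^M(f)(x) - B_q^M(f)(y)$ by inserting the mixed term $\int_{\Lambda^M} f(\tau(\lambda^M,y))\,dP^q_{M,x}(\lambda^M)$. This produces a path term $(I) = \int_{\Lambda^M}\big[f(\tau(\lambda^M,x)) - f(\tau(\lambda^M,y))\big]\,dP^q_{M,x}(\lambda^M)$ and a measure term $(II) = \int_{\Lambda^M} f(\tau(\lambda^M,y))\,d\big(P^q_{M,x} - P^q_{M,y}\big)(\lambda^M)$. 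Term $(I)$ is immediate: moving the absolute value inside the integral, hypothesis MP1 bounds it by $s\,d(x,y)$. This is precisely the role of MP1, which packages the averaged contraction of the $M$-fold composition that W1 alone (being only non-expansive) cannot provide.

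The core of the argument, and the step I expect to be the main obstacle, is term $(II)$: the variation of the nested path measure $P^q_{M,x}$ in its base point. I would attack it by telescoping the kernel $dP^q_{M,x} = dq_{\tau_{\lambda^{M-1}}(x)}(\lambda_{M-1})\cdots dq_{\tau_{\lambda_0}(x)}(\lambda_1)\,dq_x(\lambda_0)$ one factor at a time, replacing each conditional $q_{\tau_{\lambda^k}(x)}$ by $q_{\tau_{\lambda^k}(y)}$ level by level. At each level two ingredients enter: W1 keeps the two synchronously driven trajectories at distance at most $d(x,y)$, so by H3 the corresponding conditionals differ by at most $t\,d(x,y)$ in $d_{MK}$; and the surviving inner integral is recognized as a value of a lower iterate of the transfer operator at the coupled point, via the identity $\int_{\Lambda} g(\tau_\lambda(w))\,dq_w(\lambda) = B_q(g)(w)$, whose $\lambda$-Lipschitz constant is controlled through H2. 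The interplay of H2 (Lipschitz dependence on $\lambda$) and H3 (Lipschitz dependence of $q$) is what is meant to contribute the factor $r\cdot t$.

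The delicate point I want to flag explicitly is that, since W1 gives non-expansiveness but no strict contraction, a naive level-by-level estimate behaves like the recursion ${\rm Lip}(B_q^{k}f) \le (1+rt)\,{\rm Lip}(B_q^{k-1}f)$ and accumulates the growth $(1+rt)^M$; indeed, carrying out the telescope carelessly for $M=2$ already produces a $(II)$-contribution of order $2rt + r^2t^2$ rather than $rt$. The real work is therefore to organize the bookkeeping so that MP1 absorbs all of the cumulative contraction of the compositions inside term $(I)$, leaving in $(II)$ only the single genuinely new smoothness contribution bounded by $r\cdot t\,d(x,y)$ and preventing the $M$-dependent blow-up from re-entering. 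This is the crux of the estimate; with $(I) \le s\,d(x,y)$ and $(II) \le rt\,d(x,y)$ secured, we obtain $B_q^M(f) \in {\rm Lip}_{s+rt}(X)$ and the duality step completes the proof.
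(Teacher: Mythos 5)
Your decomposition of $B_q^M(f)(x)-B_q^M(f)(y)$ into the path term $(I)$ and the measure term $(II)$, with MP1 handling $(I)$ and the combination of H2 and H3 handling $(II)$, is exactly the paper's strategy (Lemma~\ref{le:GEN MarkovisLip}, fed by Lemmas~\ref{le:tauprodstillrLipshitz} and~\ref{le:medprodMrLipshitz}, followed by the same duality step). But your treatment of $(II)$ has a genuine gap, and moreover the target you set yourself for it is wrong. You correctly observe that telescoping the kernel of $P^q_{M,x}$ one conditional at a time produces $M$ contributions, each of size $t\,d(x,y)$ in $d_{MK}$ (W1 keeps the coupled trajectories within $d(x,y)$, H3 converts this into a $t\,d(x,y)$ variation of each conditional, and the $r$ enters because $\lambda^M\mapsto f(\tau(\lambda^M,y))$ is $r$-Lipschitz on $\Lambda^M$ for the sum metric --- a fact that itself needs the separate induction of Lemma~\ref{le:tauprodstillrLipshitz}). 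You then assert that ``better bookkeeping'' collapses these $M$ contributions into a single $rt\,d(x,y)$, but you give no argument, and none is available: MP1 constrains only the integral in $(I)$ and gives you nothing with which to offset the $M$-fold variation of the nested kernels, while W1 is merely non-expansive, so each of the $M$ levels genuinely contributes $t\,d(x,y)$. The correct bound is $d_{MK}(P^q_{M,x},P^q_{M,y})\le M t\,d(x,y)$, hence $(II)\le r M t\,d(x,y)$ and $B_q^M(f)\in{\rm Lip}_{s+rMt}(X)$.

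What you should have concluded is that the constant ``$s+r\cdot t$'' in the statement is a typo for $s+r\cdot M\cdot t$: this is what the paper's own proof delivers, and it is confirmed by Corollary~\ref{cor:GEN existmedequil}, which assumes $s+r\cdot M\cdot t<1$. So the missing piece of your argument is precisely the inductive estimate $d_{MK}(P^q_{k,x},P^q_{k,y})\le k t\,d(x,y)$ (the paper's Lemma~\ref{le:medprodMrLipshitz}, proved by inserting the mixed measure in which only the last kernel is driven by $y$), and the ``crux'' you flag --- eliminating the factor $M$ --- is not a step to be filled in but a false goal.
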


In order to prove theorem  \ref{thm:GEN unique invariant measure}, we will need the following:
\begin{lemma}\label{le:tauprodstillrLipshitz}
	Under Hypothesis W1 and H2,
	if we use in $\Lambda^k$ the distance given by
	$$d_{\Lambda^k}(\lambda^k,\tilde{\lambda}^k)= d_{\Lambda}(\lambda_0,\tilde{\lambda_0}) + \hdots + d_{\Lambda}(\lambda_{k-1},\tilde{\lambda}_{k-1}),$$
	where $\lambda^k=(\lambda_{0},\lambda_{1},\lambda_{2}, \ldots \lambda_{k-1}) \in \Lambda^{k}$ and	$\tilde\lambda^k=(\tilde\lambda_{0},\tilde\lambda_{1},\tilde\lambda_{2}, \ldots \tilde\lambda_{k-1}) \in \Lambda^{k}$,
	we have that, for any fixed $x \in X$, the map
	\begin{equation*}
	\begin{cases}
	\tau(\cdot, x): \Lambda^k \to X \\ \lambda^k \to \tau(\lambda^k, x)
	\end{cases}
	\end{equation*}
	is a $r$-Lipschitz map.
\end{lemma}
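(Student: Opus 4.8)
The plan is to prove the statement by induction on $k$, the base case $k=1$ being exactly hypothesis H2. The structural idea is a telescoping decomposition that replaces the coordinates $\lambda_i$ by $\tilde{\lambda}_i$ one at a time: W1 guarantees that the maps applied \emph{after} a swapped coordinate never amplify the resulting discrepancy, while H2 controls the discrepancy produced by each individual swap. Since W1 gives Lipschitz constant exactly $1$ (not a genuine contraction) and the metric $d_{\Lambda^k}$ is the \emph{sum} of the coordinate distances, the $k$ individual contributions add up without the constant $r$ degrading into a geometric factor; this additive structure is precisely what keeps the global constant equal to $r$ rather than something growing with $k$.

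First I would fix $x \in X$ and two sequences $\lambda^k,\tilde{\lambda}^k \in \Lambda^k$. For the inductive step, write $\tau(\lambda^k,x)=\tau_{\lambda_{k-1}}(\tau(\lambda^{k-1},x))$, where $\lambda^{k-1}=(\lambda_0,\ldots,\lambda_{k-2})$, and insert the intermediate point $\tau_{\lambda_{k-1}}(\tau(\tilde{\lambda}^{k-1},x))$. By the triangle inequality,
$$d(\tau(\lambda^k,x),\tau(\tilde{\lambda}^k,x)) \leq d\big(\tau_{\lambda_{k-1}}(\tau(\lambda^{k-1},x)),\tau_{\lambda_{k-1}}(\tau(\tilde{\lambda}^{k-1},x))\big) + d\big(\tau_{\lambda_{k-1}}(\tau(\tilde{\lambda}^{k-1},x)),\tau_{\tilde{\lambda}_{k-1}}(\tau(\tilde{\lambda}^{k-1},x))\big).$$

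Then I would estimate the two terms separately. For the first term, W1 says $\tau_{\lambda_{k-1}}(\cdot)$ is $1$-Lipschitz, so it is bounded by $d(\tau(\lambda^{k-1},x),\tau(\tilde{\lambda}^{k-1},x))$, to which the induction hypothesis applies, yielding the bound $r\sum_{i=0}^{k-2} d_{\Lambda}(\lambda_i,\tilde{\lambda}_i)$. For the second term, the base point $\tau(\tilde{\lambda}^{k-1},x)$ is fixed, so the map $\lambda \mapsto \tau(\lambda,\tau(\tilde{\lambda}^{k-1},x))$ is $r$-Lipschitz by H2 (uniformly in the base point), giving $r\,d_{\Lambda}(\lambda_{k-1},\tilde{\lambda}_{k-1})$. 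Summing the two bounds produces $r\sum_{i=0}^{k-1} d_{\Lambda}(\lambda_i,\tilde{\lambda}_i)=r\,d_{\Lambda^k}(\lambda^k,\tilde{\lambda}^k)$, which closes the induction.

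I do not expect a genuine obstacle: the argument is a clean telescoping and both hypotheses enter transparently. The one point deserving care is the interplay between the \emph{uniformity} in W1 and H2 and the choice of the additive metric $d_{\Lambda^k}$. One must check that the $1$-Lipschitz constant in W1 is exactly what prevents the accumulated error from acquiring extra factors, and that H2 holds with a constant $r$ independent of the (moving) base point $\tau(\tilde{\lambda}^{k-1},x)$, which is precisely the content of ``uniformly with respect to $x$'' in H2. Were the uniformity dropped, the base point drifting along the orbit would spoil the single constant $r$, and were $d_{\Lambda^k}$ replaced by the max metric the estimate would no longer close at the constant $r$.
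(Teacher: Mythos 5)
Your proof is correct and follows essentially the same route as the paper's: an induction on $k$ using the triangle inequality through the very same intermediate point $\tau_{\lambda_{k-1}}(\tau(\tilde{\lambda}^{k-1},x))$, with W1 plus the induction hypothesis controlling the first term and H2 (uniform in the base point) controlling the second. The additional remarks on why the additive metric and the uniformity are needed are accurate but match what the paper implicitly relies on.
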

\begin{proof}
	We prove this lemma by induction on $k$: suppose it works for $k-1$,
	then
	$$
	d\Big((\tau_{\lambda_{k-1}} \circ \tau_{\lambda_{k-2}} \circ \cdots   \circ \tau_{\lambda_{0}})(x),
	(\tau_{\tilde\lambda_{k-1}} \circ \tau_{\tilde\lambda_{k-2}} \circ \cdots   \circ \tau_{\tilde\lambda_{0}})(x)\Big) \leq
	$$
	$$\leq
	d\Big((\tau_{\lambda_{k-1}} \circ \tau_{\lambda_{k-2}} \circ \cdots   \circ \tau_{\lambda_{0}})(x),
	(\tau_{\lambda_{k-1}} \circ \tau_{\tilde\lambda_{k-2}} \circ \cdots   \circ \tau_{\tilde\lambda_{0}})(x)\Big) + $$
	$$+d\Big((\tau_{\lambda_{k-1}} \circ \tau_{\tilde\lambda_{k-2}} \circ \cdots   \circ \tau_{\tilde\lambda_{0}})(x),
	(\tau_{\tilde\lambda_{k-1}} \circ \tau_{\tilde\lambda_{k-2}} \circ \cdots   \circ \tau_{\tilde\lambda_{0}})(x)\Big) \leq
	$$
	$$\leq
	d\Big(( \tau_{\lambda_{k-2}} \circ \cdots   \circ \tau_{\lambda_{0}})(x),
	(\tau_{\tilde\lambda_{k-2}} \circ \cdots   \circ \tau_{\tilde\lambda_{0}})(x)\Big) +  r d_{\Lambda}\Big(\lambda_{k-1} ,
	\tilde\lambda_{k-1} \Big) \leq
	$$
	$$\leq
	r d_{\Lambda^{k-1}}\Big((\lambda_{0},\lambda_{1},\lambda_{2}, \ldots \lambda_{k-2}),
	(\tilde\lambda_{0},\tilde\lambda_{1},\tilde\lambda_{2}, \ldots \tilde\lambda_{k-2})\Big)
	+  r d_{\Lambda}\Big(\lambda_{k-1} ,
	\tilde\lambda_{k-1} \Big) =
	$$
	$$=
	r d_{\Lambda^k}\Big((\lambda_{0},\lambda_{1},\lambda_{2}, \ldots \lambda_{k-1}),
	(\tilde\lambda_{0},\tilde\lambda_{1},\tilde\lambda_{2}, \ldots \tilde\lambda_{k-1})\Big). $$
	where the second inequality comes from W1 applied to the first distance and H2 to the second, and the third and last inequality comes from the induction hypothesis.
\end{proof}

\begin{lemma}\label{le:medprodMrLipshitz}
	Under hypotheses W1 and H3,	we have that, for any $k \geq 1$ and any pair $x$ and $y$ in $X$, $$d_{MK}(P^q_{k,x},P^q_{k,y}) \leq k t d(x,y).$$
\end{lemma}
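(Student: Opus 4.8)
The plan is to argue by induction on $k$, exploiting the Markovian (disintegrated) structure of the measures $P^q_{k,x}$. The base case $k=1$ is immediate: $P^q_{1,x}=q_x$ and $P^q_{1,y}=q_y$, so $d_{MK}(P^q_{1,x},P^q_{1,y})=d_{MK}(q_x,q_y)\le t\,d(x,y)$ by H3.

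For the inductive step I would disintegrate $P^q_{k,x}$ by revealing the last index last: the first $k-1$ coordinates $\lambda^{k-1}=(\lambda_{0},\dots,\lambda_{k-2})$ are distributed according to $P^q_{k-1,x}$, and, conditionally on them, the index $\lambda_{k-1}$ is drawn from $q_{\tau_{\lambda^{k-1}}(x)}$ (this is exactly how the density defining $P^q_{k,x}$ factors). Thus, for any $g\in{\rm Lip}_1(\Lambda^k)$ with $\Lambda^k$ carrying the sum metric of Lemma~\ref{le:tauprodstillrLipshitz},
$$\int_{\Lambda^k} g\,dP^q_{k,x}=\int_{\Lambda^{k-1}}\Psi_x(\lambda^{k-1})\,dP^q_{k-1,x}(\lambda^{k-1}),\qquad \Psi_x(\lambda^{k-1}):=\int_\Lambda g(\lambda^{k-1},\lambda_{k-1})\,dq_{\tau_{\lambda^{k-1}}(x)}(\lambda_{k-1}),$$
and similarly with $y$. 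I would then split the difference as
$$\int g\,dP^q_{k,x}-\int g\,dP^q_{k,y}=\underbrace{\int \Psi_x\,d\big(P^q_{k-1,x}-P^q_{k-1,y}\big)}_{(A)}+\underbrace{\int \big(\Psi_x-\Psi_y\big)\,dP^q_{k-1,y}}_{(B)}.$$

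Term $(B)$ is the clean one: for fixed $\lambda^{k-1}$ the slice $g(\lambda^{k-1},\cdot)$ lies in ${\rm Lip}_1(\Lambda)$, so $|\Psi_x(\lambda^{k-1})-\Psi_y(\lambda^{k-1})|\le d_{MK}(q_{\tau_{\lambda^{k-1}}(x)},q_{\tau_{\lambda^{k-1}}(y)})\le t\,d(\tau_{\lambda^{k-1}}(x),\tau_{\lambda^{k-1}}(y))$ by H3; since under W1 every composition $\tau_{\lambda^{k-1}}$ is $1$-Lipschitz in its base point, this is $\le t\,d(x,y)$ uniformly in $\lambda^{k-1}$, whence $(B)\le t\,d(x,y)$. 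Term $(A)$ is where the induction hypothesis enters: estimating it by $d_{MK}(P^q_{k-1,x},P^q_{k-1,y})\le (k-1)t\,d(x,y)$ requires $\Psi_x$ to be $1$-Lipschitz on $\Lambda^{k-1}$, after which the two estimates add to $k\,t\,d(x,y)$ and close the induction.

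The main obstacle is exactly this Lipschitz control of $\Psi_x$ in term $(A)$. Its dependence on $\lambda^{k-1}$ enters in two ways: through the explicit first slot of $g$, which contributes Lipschitz constant $1$ (as $g$ is $1$-Lipschitz for the sum metric), and through the base point $\tau_{\lambda^{k-1}}(x)$ of the kernel $q_{\tau_{\lambda^{k-1}}(x)}$. The first contribution is harmless; the delicate one is the second, since it is governed by how $\tau_{\lambda^{k-1}}(x)$ moves with $\lambda^{k-1}$ — precisely the $r$-Lipschitz estimate of Lemma~\ref{le:tauprodstillrLipshitz} (hypothesis H2) — and it must be absorbed without inflating the constant. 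I expect the crux of the argument to be keeping the effective Lipschitz constant of $\Psi_x$ equal to $1$, so that $(A)$ contributes exactly $(k-1)t\,d(x,y)$ and the recurrence telescopes to $k\,t\,d(x,y)$; it is here that the interaction between H3 and the $1$-Lipschitz contraction of W1 has to be used carefully. An equivalent route is the forward disintegration, revealing $\lambda_{0}$ first with continuation $P^q_{k-1,\tau_{\lambda_0}(x)}$, in which the roles of $(A)$ and $(B)$ are interchanged but the same delicate Lipschitz-in-parameter estimate reappears.
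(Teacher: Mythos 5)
Your decomposition is exactly the one the paper uses: the paper introduces the hybrid measure $P^q_{k,x,y}$ (first $k-1$ kernels driven by $x$, last kernel driven by $y$), and splitting $P^q_{k,x}-P^q_{k,y}$ through it produces precisely your terms $(A)$ and $(B)$ (with the roles of $x$ and $y$ in the hybrid interchanged, which is immaterial). Your treatment of term $(B)$ — H3 followed by the $1$-Lipschitz property of $\tau_{\lambda^{k-1}}(\cdot)$ from W1 — is the paper's argument verbatim.

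The one step you leave open is, however, a genuine gap, and it is worth saying that the paper does not close it either: to bound term $(A)$ by $d_{MK}(P^q_{k-1,x},P^q_{k-1,y})$ you need $\Psi_x\in{\rm Lip}_1(\Lambda^{k-1})$, and the paper simply asserts that the corresponding map $\lambda^{k-1}\mapsto\int_\Lambda g\,dq_{\tau_{\lambda^{k-1}}(y)}(\lambda_{k-1})$ is $1$-Lipschitz. A direct estimate does not give $1$: writing
$$\Psi_x(\lambda^{k-1})-\Psi_x(\tilde\lambda^{k-1})=\int_\Lambda\bigl(g(\lambda^{k-1},\cdot)-g(\tilde\lambda^{k-1},\cdot)\bigr)\,dq_{\tau_{\lambda^{k-1}}(x)}+\int_\Lambda g(\tilde\lambda^{k-1},\cdot)\,\bigl(dq_{\tau_{\lambda^{k-1}}(x)}-dq_{\tau_{\tilde\lambda^{k-1}}(x)}\bigr),$$
the first piece is controlled by $d_{\Lambda^{k-1}}(\lambda^{k-1},\tilde\lambda^{k-1})$ since $g$ is $1$-Lipschitz for the sum metric, but the second piece is controlled by $d_{MK}\bigl(q_{\tau_{\lambda^{k-1}}(x)},q_{\tau_{\tilde\lambda^{k-1}}(x)}\bigr)\le t\,r\,d_{\Lambda^{k-1}}(\lambda^{k-1},\tilde\lambda^{k-1})$ via H3 and Lemma~\ref{le:tauprodstillrLipshitz}, so the effective Lipschitz constant is $1+rt$, not $1$. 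Running the induction with this constant yields $d_{MK}(P^q_{k,x},P^q_{k,y})\le t\,\bigl(\sum_{j=0}^{k-1}(1+rt)^j\bigr)\,d(x,y)$ rather than $k\,t\,d(x,y)$. This does not invalidate the downstream use of the lemma — Corollary~\ref{cor:GEN existmedequil} only needs a finite bound that can be made small by a smallness assumption on the constants, and one would replace $s+rMt<1$ by the corresponding expression — but your instinct that the telescoping to exactly $k\,t\,d(x,y)$ is the delicate point is correct, and neither your sketch nor the paper's proof actually establishes the constant $1$ that the stated bound requires.
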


\begin{proof}
	We prove this lemma by induction on $k$: suppose it works for $k-1$.
Remember that
	$$d P_{k,x}^{q}\left(\lambda_{0}, \ldots, \lambda_{k-1}\right)  =  dq_{\tau_{\lambda^{k-1}}(x)} (\lambda_{k-1})
	dq_{\tau_{\lambda^{k-2}}(x)} (\lambda_{k-2})\hdots
	dq_{\tau_{\lambda_1}(\tau_{\lambda_0}(x))} (\lambda_2)
	dq_{\tau_{\lambda_0}(x)} (\lambda_1)
	dq_{x}(\lambda_0)$$
	and define
	$$d P_{k,x,y}^{q}\left(\lambda_{0}, \ldots, \lambda_{k-1}\right)  =  dq_{\tau_{\lambda^{k-1}}(y)} (\lambda_{k-1})
	dq_{\tau_{\lambda^{k-2}}(x)} (\lambda_{k-2})\hdots
	dq_{\tau_{\lambda_1}(\tau_{\lambda_0}(x))} (\lambda_2)
	dq_{\tau_{\lambda_0}(x)} (\lambda_1)
	dq_{x}(\lambda_0).$$
	Note that
	$$d P_{k,x}^{q}\left(\lambda_{0}, \ldots, \lambda_{k-1}\right)  =  dq_{\tau_{\lambda^{k-1}}(x)} (\lambda_{k-1})
	d P_{k-1,x}^{q}\left(\lambda_{0}, \ldots, \lambda_{k-2}\right)$$
	while
	$$d P_{k,x,y}^{q}\left(\lambda_{0}, \ldots, \lambda_{k-1}\right)  =  dq_{\tau_{\lambda^{k-1}}(y)} (\lambda_{k-1})
	d P_{k-1,x}^{q}\left(\lambda_{0}, \ldots, \lambda_{k-2}\right).$$
	We will also need
	$$d P_{k,y}^{q}\left(\lambda_{0}, \ldots, \lambda_{k-1}\right)  =  dq_{\tau_{\lambda^{k-1}}(y)} (\lambda_{k-1})
	d P_{k-1,y}^{q}\left(\lambda_{0}, \ldots, \lambda_{k-2}\right).$$
	
	We know that
	$$d_{MK}(P^q_{k,x},P^q_{k,y}) = \sup_{g \in Lip^1(\Lambda^k)} \int_{\Lambda^k}g(\lambda^k) \Big(dP^q_{k,x}(\lambda^k)-dP^q_{k,y}(\lambda^k)\Big) $$
	Let $g:\Lambda^k \to \mathbb{R}$ be a $1$-Lipschitz map. We have
	$$
	\int_{\Lambda^k}g(\lambda^k) \Big(dP^q_{k,x}(\lambda^k)-dP^q_{k,y}(\lambda^k)\Big)=$$
	$$
	=\int_{\Lambda^k}g(\lambda^k) \Big(dP^q_{k,x}(\lambda^k)
	-dP^q_{k,x,y}(\lambda^k)+dP^q_{k,x,y}(\lambda^k)
	-dP^q_{k,y}(\lambda^k)\Big).
	$$
	We begin by controlling the first part of the integral above:
	$$\int_{\Lambda^k}g(\lambda^k) \Big(dP^q_{k,x}(\lambda^k)
	-dP^q_{k,x,y}(\lambda^k)\Big)
	= $$
	$$
	=\int_{\Lambda} \int_{\Lambda^{k-1}}
	g(\lambda^k)
	d P_{k-1,x}^{q}\left(\lambda_{0}, \ldots, \lambda_{k-2}\right)
	\Big(dq_{\tau_{\lambda^{k-1}}(x)} (\lambda_{k-1}) - dq_{\tau_{\lambda^{k-1}}(y)} (\lambda_{k-1})\Big)
	\leq $$
	$$ \leq d_{MK}(q_{\tau_{\lambda^{k-1}}(x)},q_{\tau_{\lambda^{k-1}}(y)})
	\leq t d(\tau_{\lambda^{k-1}}(x),\tau_{\lambda^{k-1}}(y))
	\leq t d(x,y)
	$$
	where we used W1 in the last inequality, H3 in the previous one, and the fact that
	$$\lambda_{k-1} \to \int_{\Lambda^{k-1}}
	g(\lambda^k)
	d P_{k-1,x}^{q}\left(\lambda_{0}, \ldots, \lambda_{k-2}\right)
	$$
	is 1-Lipschitz function.
	
	Now we control the second part of the integral:
	$$\int_{\Lambda^k}g(\lambda^k) \Big(dP^q_{k,x,y}(\lambda^k)
	-dP^q_{k,y}(\lambda^k)\Big)= $$
	$$	=\int_{\Lambda^{k-1}} \int_{\Lambda}
	g(\lambda^k)
	dq_{\tau_{\lambda^{k-1}}(y)} (\lambda_{k-1})
	\Big(d P_{k-1,x}^{q}\left(\lambda_{0}, \ldots, \lambda_{k-2}\right) - P_{k-1,y}^{q}\left(\lambda_{0}, \ldots, \lambda_{k-2}\right)\Big)\leq $$
	$$ \leq
	d_{MK}(P_{k-1,x}^{q},P_{k-1,y}^{q})
	\leq  (k-1) t  d(x,y)
	$$
	where we used the induction hypothesis, and the fact that
	$$\left(\lambda_{0}, \ldots, \lambda_{k-2}\right) \to \int_{\Lambda}
	g(\lambda^k)
	dq_{\tau_{\lambda^{k-1}}(y)} (\lambda_{k-1})
	$$
	is 1-Lipschitz function.	
\end{proof}

\begin{lemma}\label{le:GEN MarkovisLip}
	Consider an IFSm  $\mathcal{R} :=(X, \tau, q)$  satisfying the  conditions W1, MP1, H2 and H3.	There exist $c>0$ such that, for any $f\in {\rm Lip}_{1}(X)$, we have $B^M_{q}(f) \in {\rm Lip}_{c}(X)$.
\end{lemma}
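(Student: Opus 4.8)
The plan is to prove Lemma~\ref{le:GEN MarkovisLip} by reducing the $M$-th iterate $B_q^M$ to the same kind of estimate that established Lemma~\ref{le:MarkovisLip}, but now using the composed maps $\tau(\lambda^M,\cdot)$ and the iterated measures $P^q_{M,x}$ in place of $\tau(\lambda,\cdot)$ and $q_x$. The key observation is equation~\eqref{eq:iterado-operador}, which writes
$$
B^M_q(f)(x)=\int_{\Lambda^M} f(\tau(\lambda^M,x))\, dP^q_{M,x}(\lambda^M).
$$
This has exactly the structure of a one-step transfer operator, but with the parameter measure $P^q_{M,x}$ on $\Lambda^M$ playing the role of $q_x$ and the composed map $\tau(\lambda^M,\cdot)$ playing the role of $\tau(\lambda,\cdot)$. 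So I would simply rerun the proof of Lemma~\ref{le:MarkovisLip} at this level.

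Concretely, fix $f\in{\rm Lip}_1(X)$ and compare $B^M_q(f)(x)$ with $B^M_q(f)(y)$. First I would insert and subtract the term $\int_{\Lambda^M} f(\tau(\lambda^M,y))\,dP^q_{M,x}(\lambda^M)$, splitting the difference into two pieces exactly as in Lemma~\ref{le:MarkovisLip}. The first piece,
$$
\int_{\Lambda^M}\bigl[f(\tau(\lambda^M,x))-f(\tau(\lambda^M,y))\bigr]\,dP^q_{M,x}(\lambda^M),
$$
is controlled by hypothesis MP1, which bounds it by $s\,d(x,y)$. The second piece,
$$
\int_{\Lambda^M} f(\tau(\lambda^M,y))\,dP^q_{M,x}(\lambda^M)-\int_{\Lambda^M} f(\tau(\lambda^M,y))\,dP^q_{M,y}(\lambda^M),
$$
is an integral of a single fixed function against the difference of two measures on $\Lambda^M$, so it is bounded by $L\cdot d_{MK}(P^q_{M,x},P^q_{M,y})$, where $L$ is the Lipschitz constant of $\lambda^M\mapsto f(\tau(\lambda^M,y))$. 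By Lemma~\ref{le:tauprodstillrLipshitz} this map is $r$-Lipschitz (since $f\in{\rm Lip}_1(X)$), and by Lemma~\ref{le:medprodMrLipshitz} we have $d_{MK}(P^q_{M,x},P^q_{M,y})\le M\,t\,d(x,y)$. Combining, the second piece is bounded by $r\cdot M t\, d(x,y)$, and taking $c=s+r\cdot M t$ finishes the proof.

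The main subtlety — and the reason the two preliminary lemmas were needed — is that the naive constant is \emph{not} simply $s+r\cdot t$: the measure-comparison term degrades by a factor of $M$ because $P^q_{M,x}$ is built from $M$ successive applications of the family $q$, each contributing a factor $t$ (this is precisely the content of Lemma~\ref{le:medprodMrLipshitz}). I expect the only real care needed is bookkeeping: verifying that the two auxiliary maps appearing in the split are genuinely $1$-Lipschitz (for the $d_{MK}$ duality to apply) and that the Lipschitz constants in Lemmas~\ref{le:tauprodstillrLipshitz} and~\ref{le:medprodMrLipshitz} are used with the correct distance $d_{\Lambda^M}$ on the product space. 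Since all the hard work has been offloaded to those two lemmas, this final lemma is essentially the assembly step, yielding $c=s+rMt$; the subsequent theorem will then extract contractivity of $T^M_q$ under the smallness assumption on this constant.
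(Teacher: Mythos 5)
Your proposal is correct and follows essentially the same route as the paper's own proof: the same insert-and-subtract decomposition of $B^M_q(f)(x)-B^M_q(f)(y)$, with the first term controlled by MP1 and the second by the $r$-Lipschitz bound of Lemma~\ref{le:tauprodstillrLipshitz} combined with the estimate $d_{MK}(P^q_{M,x},P^q_{M,y})\le M\,t\,d(x,y)$ of Lemma~\ref{le:medprodMrLipshitz}, yielding $c=s+r\cdot M\cdot t$. No substantive differences to report.
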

\begin{proof}[ of the Lemma]
	Let	$f\in {\rm Lip}_{1}(X)$.
	Let us compare, for any $x, y \in X$ the difference
	$$| B^M_{q}(f)(x) - B^M_{q}(f)(y)|=
	 | \int_{\Lambda^M}  f(\tau(\lambda^M,x)) d P^q_{M,x}(\lambda^M)
	  -  \int_{\Lambda^M}  f(\tau(\lambda^M,y)) d P^q_{M,y}(\lambda^M) |= $$
	  $$ = 	 | \int_{\Lambda^M} \Big( f(\tau(\lambda^M,x))
	  - f(\tau(\lambda^M,y)) +f(\tau(\lambda^M,y)) \Big)
	   d P^q_{M,x}(\lambda^M)
	  -  \int_{\Lambda^M}  f(\tau(\lambda^M,y)) d P^q_{M,y}(\lambda^M) | \leq $$
		  $$ \leq  	 | \int_{\Lambda^M} \Big( f(\tau(\lambda^M,x))
	- f(\tau(\lambda^M,y)) \Big)
	d P^q_{M,x}(\lambda^M) |+ $$
	$$ + |\int_{\Lambda^M}  f(\tau(\lambda^M,y))
	d P^q_{M,x}(\lambda^M)
	-  \int_{\Lambda^M}  f(\tau(\lambda^M,y)) d P^q_{M,y}(\lambda^M) | = 	  $$

	$$\leq s  \cdot d(x,y) + r \cdot d_{MK}(P^q_{M,x},P^q_{M,y})
	 \leq  (s+r \cdot M \cdot t) d(x,y),$$
	where we used lemmas \ref{le:tauprodstillrLipshitz}
	and \ref{le:medprodMrLipshitz}.
	
Once the claim is proved, we take $c=s+r \cdot M \cdot t$ and the lemma is proved.

\end{proof}

\begin{proof}[ of the Theorem  \ref{thm:GEN unique invariant measure}]
	Consider $\mu, \nu \in \mathcal{P}(X)$. For any $f\in {\rm Lip}_{1}(X)$, lemma \ref{le:GEN MarkovisLip} implies $c^{-1}B^M_{q}(f) \in {\rm Lip}_{1}(X)$. Then we have
	$$d_{MK}(T^M_{q}(\mu),T^M_{q}(\nu))=\sup_{f \in {\rm Lip}_{1}(X)} \int_{X}f dT^M_{q}(\mu) - \int_{X}f dT^M_{q}(\nu)= $$ $$= c \sup_{f \in {\rm Lip}_{1}(X)} \int_{X} c^{-1}B^M_{q}(f) d\mu - \int_{X} c^{-1}B^M_{q}(f) d\nu \leq c d_{MK}(\mu,\nu).$$
\end{proof}

As a consequence of theorem \ref{thm:GEN unique invariant measure} and the Banach contraction theorem, we have

\begin{corollary}\label{cor:GEN existmedequil} Under conditions W1, MP1, H2, and H3,  and supposing $s+ r \cdot M  \cdot t<1$,
	there exists a unique invariant probability measure $\mu_{\mathcal{R}}$.
	Also, for any regular probability measure $\mu_0$, we have
	$$T_{q}^{k}(\mu_0) \to \mu_{\mathcal{R}},$$ w.r.t. the Monge-Kantorovich distance.
	We also have exponential convergence under such distance.

\end{corollary}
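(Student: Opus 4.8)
The plan is to deduce the corollary entirely from Theorem~\ref{thm:GEN unique invariant measure} together with the Banach contraction theorem, in the same spirit as Corollary~\ref{cor:existmedequil} was deduced from Theorem~\ref{thm: unique invariant measure}, with one additional step to pass from the iterate $T_q^M$ to $T_q$ itself. First I would record the ambient completeness: since $X$ is compact, $\mathcal{P}(X)$ is weak-$*$ compact and, as noted in Section~\ref{sec:IFSs on compact spaces with a compact set of maps}, the $d_{MK}$-topology coincides with the weak-$*$ topology, so $(\mathcal{P}(X), d_{MK})$ is a compact, hence complete, metric space. By Theorem~\ref{thm:GEN unique invariant measure} the operator $T_q^M$ is $c$-Lipschitz with $c = s + r\cdot M\cdot t$, and the standing hypothesis $c<1$ makes $T_q^M$ a contraction. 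The Banach contraction theorem then produces a unique $\mu_{\mathcal{R}} \in \mathcal{P}(X)$ with $T_q^M(\mu_{\mathcal{R}}) = \mu_{\mathcal{R}}$, and $(T_q^M)^m(\nu) \to \mu_{\mathcal{R}}$ for every $\nu \in \mathcal{P}(X)$.

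Next I would promote this fixed point of $T_q^M$ to a genuine invariant measure for $T_q$. Applying $T_q$ to the identity $T_q^M(\mu_{\mathcal{R}}) = \mu_{\mathcal{R}}$ and using $T_q \circ T_q^M = T_q^{M+1} = T_q^M \circ T_q$ gives $T_q^M(T_q(\mu_{\mathcal{R}})) = T_q(\mu_{\mathcal{R}})$, so $T_q(\mu_{\mathcal{R}})$ is also a fixed point of $T_q^M$; uniqueness of that fixed point forces $T_q(\mu_{\mathcal{R}}) = \mu_{\mathcal{R}}$. Uniqueness of the $T_q$-invariant measure is then immediate, since any measure fixed by $T_q$ is automatically fixed by $T_q^M$ and hence equals $\mu_{\mathcal{R}}$.

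Finally, for convergence of the full sequence $T_q^k(\mu_0)$ I would invoke Euclidean division $k = mM + j$ with $0 \le j < M$, so that $T_q^k(\mu_0) = (T_q^M)^m(T_q^j(\mu_0))$. Contractivity of $T_q^M$ yields
$$ d_{MK}(T_q^k(\mu_0), \mu_{\mathcal{R}}) \le c^m\, d_{MK}(T_q^j(\mu_0), \mu_{\mathcal{R}}) \le c^m D, $$
where $D := \max_{0 \le j < M} d_{MK}(T_q^j(\mu_0), \mu_{\mathcal{R}})$ is finite because the $d_{MK}$-diameter of $\mathcal{P}(X)$ is bounded by $\operatorname{diam}(X) < \infty$. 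Writing $m = \lfloor k/M \rfloor \ge k/M - 1$ and setting $\rho := c^{1/M} \in (0,1)$, this last quantity is at most $c^{-1} D\, \rho^{k}$, giving both $T_q^k(\mu_0) \to \mu_{\mathcal{R}}$ and the asserted exponential rate. The argument is routine apart from this final point: the genuine content is upgrading the exponential contraction of the single iterate $T_q^M$ into exponential convergence along every index $k$, which the division $k = mM + j$ and the finiteness of the diameter of $\mathcal{P}(X)$ handle cleanly.
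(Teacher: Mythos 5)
Your proof is correct and follows essentially the same route as the paper's: the paper also applies the Banach theorem to the contraction $T_q^M$, promotes its unique fixed point to a fixed point of $T_q$ via the commutation $T_q\circ T_q^M=T_q^M\circ T_q$ and uniqueness, and obtains convergence of the full orbit by writing $k=mM+j$ and bounding by $c^{m}\max_{0\le j<M}d_{MK}(T_q^{j}(\mu_0),\mu_{\mathcal{R}})$. Your only additions — noting completeness of $(\mathcal{P}(X),d_{MK})$ and making the rate $\rho=c^{1/M}$ explicit — are harmless refinements of the same argument.
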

\begin{proof} It is a direct consequence of the following general fact, a generalization of the Banach  fixed point theorem:
	If $f:M\to M$ is a continuous map defined in a complete metric space $M$ such that some iterate $f^N$ is a $\lambda$-uniform contraction ($0\leq \lambda<1$), then we have an unique fixed point $p=f^N(p)$ for  $f^N$.
	Now, $f(p)=f(f^N(p))=f^N(f(p))$ which shows that $f(p)$ is also a fixed point for $f^N$. Therefore, $f(p)=p$ by uniqueness.
	To show that $p$ is attracting, given $k=Ni+j$,
	$$d(f^k(x),p)=d(f^k(x),f^k(p))= d(f^{Ni}(f^j(x)),f^{Ni}(f^j(p)))
	\leq  \lambda^i C $$
	where $$C=\max\{d(x,p), d(f(x),p), \hdots, d(f^{N-1}(x),p)  \}.$$
\end{proof}

\section{The support of the invariant measure coincides with the attractor}\label{sec:support}
The same conclusion in the next theorem holds under the stronger (but simpler)
hypotheses C1, H2, H3, H4. However we choose to prove that under less restrictive conditions. 
\begin{theorem}\label{te:supp equal attrac}
	Under hypothesis W1, CP1, H2, H3 and H4,
	we have  $$\supp(\mu_{\mathcal{R}})=A_{\mathcal{R}}.$$ 
\end{theorem}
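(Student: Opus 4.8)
The plan is to prove the single sharper identity
$$\supp\big(T_q(\mu)\big) = F_{\mathcal{R}}\big(\supp(\mu)\big) \qquad \text{for every } \mu \in \mathcal{P}(X),$$
and then apply it to the invariant measure $\mu_{\mathcal{R}}$, which exists and is unique by Corollary~\ref{cor:GEN existmedequil} (recall CP1 $\Rightarrow$ MP1). Once the identity is available, invariance $T_q(\mu_{\mathcal{R}})=\mu_{\mathcal{R}}$ yields $\supp(\mu_{\mathcal{R}}) = F_{\mathcal{R}}(\supp(\mu_{\mathcal{R}}))$, so $\supp(\mu_{\mathcal{R}})$ is a nonempty compact fixed set of $F_{\mathcal{R}}$. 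Under CP1 the iterate $F_{\mathcal{R}}^{M}$ is a contraction on $(\mathcal{K}^{*}(X),h)$ (the remark following Theorem~\ref{thm: unique invariant set}), hence has a unique fixed point; since both $\supp(\mu_{\mathcal{R}})$ and $A_{\mathcal{R}}$ are fixed by $F_{\mathcal{R}}$, and therefore by $F_{\mathcal{R}}^{M}$, they must coincide. This reduces everything to the displayed identity.

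For the inclusion $\supp(T_q(\mu)) \subseteq F_{\mathcal{R}}(\supp(\mu))$, write $S=\supp(\mu)$ and note $F_{\mathcal{R}}(S)=\cup_{\lambda}\tau_{\lambda}(S)$ is compact, hence closed. If $y\notin F_{\mathcal{R}}(S)$, pick an open $V\ni y$ disjoint from $F_{\mathcal{R}}(S)$; then $\tau(\lambda,x)\notin V$ for all $x\in S$ and all $\lambda$, so for any $f\in C(X)$ supported in $V$ the function $B_q(f)$ vanishes on $S$, and $\int_X B_q(f)\,d\mu = \int_X f\,dT_q(\mu) = 0$. As this holds for all such $f$, we get $T_q(\mu)(V)=0$, so $y\notin\supp(T_q(\mu))$.

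The reverse inclusion $F_{\mathcal{R}}(\supp(\mu)) \subseteq \supp(T_q(\mu))$ is where H4 is essential, and I expect it to be the main obstacle. Fix $y=\tau_{\lambda}(x_0)$ with $x_0\in S$, and let $V$ be an open neighborhood of $y$; by Urysohn choose $f\in C(X)$ with $0\le f\le 1$, $f(y)=1$ and $\supp(f)\subseteq V$. The map $\lambda'\mapsto f(\tau(\lambda',x_0))$ is continuous, nonnegative, and equals $1$ at $\lambda'=\lambda$, hence positive on a nonempty open subset of $\Lambda$; by H4 this set has positive $q_{x_0}$-measure, so $B_q(f)(x_0)>0$. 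Since $B_q(f)\in C(X)$ it remains positive on a neighborhood $W$ of $x_0$, and $x_0\in S$ forces $\mu(W)>0$; therefore
$$T_q(\mu)(V) \ge \int_X f\, dT_q(\mu) = \int_X B_q(f)\, d\mu \ge \int_W B_q(f)\, d\mu > 0,$$
so $y\in\supp(T_q(\mu))$. Combining the two inclusions gives the identity, and the theorem follows as explained above. It is worth noting that only H4 together with the continuity of $\tau$ and of $q$ enters the identity itself; the hypotheses W1, CP1, H2, H3 serve only to guarantee the existence and uniqueness of $A_{\mathcal{R}}$ and $\mu_{\mathcal{R}}$ and the contractivity of $F_{\mathcal{R}}^{M}$ used in the final uniqueness step.
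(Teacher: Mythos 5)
Your proof is correct, and it takes a genuinely different route from the paper's. The paper argues directly about the pair $(\mu_{\mathcal{R}},A_{\mathcal{R}})$: for $\supp(\mu_{\mathcal{R}})\subseteq A_{\mathcal{R}}$ it iterates the transfer operator, combining $\int f\,d\mu_{\mathcal{R}}=\int B_q^k(f)\,d\mu_{\mathcal{R}}$ with $h(F_{\mathcal{R}}^k(X),A_{\mathcal{R}})\to 0$; for the reverse inclusion it picks a backward orbit $x_0=\tau_{\alpha_1}\circ\cdots\circ\tau_{\alpha_N}(x_N)$ inside the attractor, uses CP1 to shrink $\tau_{\alpha_1}\circ\cdots\circ\tau_{\alpha_N}(X)$ into a ball around $x_0$, perturbs the indices by continuity, and then applies H4 to a product set $U\subseteq\Lambda^N$ via the measures $P^q_{N,x}$. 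You instead prove the one-step intertwining $\supp(T_q\mu)=F_{\mathcal{R}}(\supp\mu)$ for an arbitrary $\mu$ and let the uniqueness of the fixed compact set of $F_{\mathcal{R}}^M$ do all the dynamical work; H4 is only ever invoked one step at a time, and no backward orbits or product measures on $\Lambda^N$ are needed. This is cleaner and more modular, and the identity is of independent interest; the paper's direct argument buys essentially nothing extra, since both proofs ultimately rest on CP1 only through the fact that $A_{\mathcal{R}}$ is the unique fixed compact set. Two small points you should make explicit: deducing $T_q(\mu)(V)=0$ from the vanishing of $\int f\,dT_q(\mu)$ for all $f$ supported in $V$ uses inner regularity of $T_q(\mu)$ (harmless, since $\mathcal{P}(X)$ consists of regular Borel probabilities on a compact metric space, and one approximates compact $K\subseteq V$ by Urysohn functions), and the continuity of $B_q(f)$, needed to produce the neighborhood $W$ of $x_0$, comes from the continuity of $x\mapsto q_x$ built into Definition~\ref{def:IFS with measures}.
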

\begin{proof}
	Remember that hypothesis CP1 implies hypothesis MP1, as well as C1 implies M1. Therefore, the hypotheses assumed here imply that 	$\mu_{\mathcal{R}}$ and $A_{\mathcal{R}}$ indeed exist.
	
	We begin by proving that $\supp(\mu_{\mathcal{R}}) \subseteq A_{\mathcal{R}}$, 	which is the same as proving that $x_0 \notin A_{\mathcal{R}}$ implies $x_0 \notin \supp(\mu_{\mathcal{R}})$.
	
	Suppose $x_0 \notin A_{\mathcal{R}}$. As the attractor is a closed set, there exists $\epsilon>0$ such that the ball centered in $x_0$ with radius $\epsilon$, denoted by $B(x_0,\epsilon)$, does not intersect $A_{\mathcal{R}}$:
	$$B(x_0,\epsilon) \cap A_{\mathcal{R}} = \emptyset.$$
	Now, there exist $K_0 \in \mathbb{N}$ such that whenever $k\geq K_0$ we have $h( F_{\mathcal{R}}^k(X), A_{\mathcal{R}}) < \frac{\epsilon}{2}$. Therefore, $k\geq K_0$ implies
	$$ \max_{z \in  F_{\mathcal{R}}^k(X)} d(z,A_{\mathcal{R}}) \leq h( F_{\mathcal{R}}^k(X), A_{\mathcal{R}}) < \frac{\epsilon}{2}$$
	and  we have
	$$ F_{\mathcal{R}}^k(X) \cap B(x_0, \epsilon/2) = \emptyset$$
	whenever $k\geq K_0$.
	Now let $f:X \rightarrow \mathbb{R}$ be any non-negative function
	that satisfy $f(z)=0$ if $z \notin B(x_0,\epsilon/2)$.
	We have
	$$\int f d\mu_{\mathcal{R}} = \int B_q^k(f) d\mu_{\mathcal{R}} =
	\int_X \int_{\Lambda^k}  f( \tau_{\lambda^{k}}(z)) d P^q_{k,x}(\lambda^k)d\mu_{\mathcal{R}} (z)=0 $$
	which equals zero because
	$	\tau_{\lambda^{k}}(z) \in F_{\mathcal{R}}^k(X)$ and
	$ F_{\mathcal{R}}^k(X) \cap B(x_0, \epsilon/2) = \emptyset$, which makes
	$f(  \tau_{\lambda^{k}}(z) )=0$ for any $z\in X$ and any $(\lambda_0, \lambda_1, \hdots, \lambda_{k-1})$ in the last integral.
		
	Now let us prove that $A_{\mathcal{R}} \subseteq \supp(\mu_{\mathcal{R}})$:
	Let $x_0 \in  A_{\mathcal{R}}$.
	Let $V$ be any open neighborhood of $x_0$. Let $\epsilon>0$ be such that the open ball $B(x_0,\epsilon) \subseteq V$.
	
	As a direct consequence of the definition of an attractor, there exist two sequences $(\alpha_1,\alpha_2,\hdots) \in \Lambda^{\mathbb{N}}$ and $(x_1,x_2,\hdots) \in  A_{\mathcal{R}}^{\mathbb{N}}$ which satisfy
	$$x_0 = 	\tau_{\alpha_1} \circ \tau_{\alpha_2} \circ \hdots \circ \tau_{\alpha_k} (x_k) \;\; \mbox{ for all }k \geq 0.$$
		
	Now  hypothesis CP1 (or C1) implies that
	$$\operatorname{diam} (\tau_{\lambda_1} \circ \tau_{\lambda_2} \circ \hdots \circ \tau_{\lambda_k}(X)) \to 0$$
	when $k \rightarrow +\infty$, for any sequence $\lambda_1,\lambda_2,\hdots \in  \Lambda^{\mathbb{N}}$.
	In particular there exists $N \geq 1$ such that
	$$\tau_{\alpha_1} \circ \tau_{\alpha_2} \circ \hdots \circ \tau_{\alpha_N}(X) \subseteq B(x_0, \epsilon/2).$$
   To prove this, let $N \geq 1$ be such that $
		diam (\tau_{\alpha_1} \circ \tau_{\alpha_2} \circ \hdots \circ \tau_{\alpha_N}(X))<\epsilon/2$.  Then for any $z \in X$, we have
		$d(\tau_{\alpha_1} \circ \tau_{\alpha_2} \circ \hdots \circ \tau_{\alpha_N}(z),x_0) = d(\tau_{\alpha_1} \circ \tau_{\alpha_2} \circ \hdots \circ \tau_{\alpha_N}(z),\tau_{\alpha_1} \circ \tau_{\alpha_2} \circ \hdots \circ \tau_{\alpha_N}(x_N)) \leq
		$
		$
		\leq
		diam (\tau_{\alpha_1} \circ \tau_{\alpha_2} \circ \hdots \circ \tau_{\alpha_N}(X)) < \epsilon/2.$

	Now we use hypothesis H2 (in fact we just need the continuity   of $\lambda \rightarrow \tau_{\lambda}(x)$) to conclude that
	there exists $\eta>0$ such that
	$$\tau_{\lambda_1} \circ \tau_{\lambda_2} \circ \hdots \circ \tau_{\lambda_N}(X)\subseteq B(x_0, \epsilon) $$
	whenever $d_{\Lambda}(\lambda_i,\alpha_i)<\eta$ for $1\leq i \leq N$.
	
	We now remember that $\mu_{\mathcal{R}}$ is the invariant distribution of the stochastic process $Z_n$ defined after theorem \ref{th:main-tobeproved} (provided the initial distribution of $Z_0$ is indeed $\mu_{\mathcal{R}}$).
	
	Let $$U = \Big\{ (\lambda_1,\lambda_2, \hdots, \lambda_N) \in \Lambda^N \;\|\; d_{\Lambda}(\lambda_i,\alpha_i)<\eta \;\;\forall\;\; 1\leq i \leq N\Big\}.$$
	
	Also note that the event $[Z_n \in V]$ contains the event $U$.
	Therefore we have $$\mu_{\mathcal{R}}(V)=P[Z_N \in V] \geq $$
	$$ \geq
	\int_X  \Big(   \int_{U}
	dq_{\tau_{\lambda^{N}}(x)} (\lambda_{N}) \hdots
	dq_{\tau_{\lambda^2}(x)} (\lambda_2)
	dq_{\tau_{\lambda^1}(x)} (\lambda_1)
	\Big) d\mu_{\mathcal{R}} (z) > 0, $$
	where the strict inequality is assured by H4.
	As the open neighborhood $V$ of $x_0$ is arbitrary, we have that $x_0 \in \supp(\mu_{\mathcal{R}})$.
\end{proof}

Now we can unify the  results above in the following corollary,
which says what the expression {\it mild assumptions} mean in theorem \ref{th:main-tobeproved}.

\begin{corollary}\label{cor:main-proved}
	  Let $\mathcal{R}=(X, \tau, q)$ be a  normalized IFSm, then under
	  conditions W1, CP1, H2, H3 and H4,   and supposing $s+ r \cdot M \cdot t<1$,	  all the conclusions of theorem \ref{th:main-tobeproved} are true.
\end{corollary}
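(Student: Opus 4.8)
The plan is to derive Corollary~\ref{cor:main-proved} by assembling the three conclusions of Theorem~\ref{th:main-tobeproved} from the component results already proved; the only real work is to verify that the hypotheses W1, CP1, H2, H3, H4 together with $s+r\cdot M\cdot t<1$ entail the (sometimes weaker, sometimes differently packaged) hypotheses required by each of those results, and that the objects they produce coincide.

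First I would establish conclusion~(1). Since this is a purely topological statement about the attractor in which the family $q$ plays no role, only the contractivity of the maps matters. Hypothesis CP1 asserts that some iterate $\tau_{\lambda^M}$ is a uniform $s$-contraction with $0<s<1$; by the remark following Theorem~\ref{thm: unique invariant set} in Section~\ref{sec:existence-of-the-attractor}, this already guarantees a unique compact invariant set $A_{\mathcal{R}}$ with $F_{\mathcal{R}}(A_{\mathcal{R}})=A_{\mathcal{R}}$ together with exponential convergence $F_{\mathcal{R}}^{k}(B)\to A_{\mathcal{R}}$ in the Hausdorff distance for every nonempty compact $B$. Thus (1) is immediate.

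Next I would obtain conclusion~(2) by invoking Corollary~\ref{cor:GEN existmedequil}, whose hypotheses are W1, MP1, H2, H3 and $s+r\cdot M\cdot t<1$. Of these, W1, H2, H3 and the contraction bound are assumed directly, so the single point to check is that CP1 implies MP1 with the same $M$ and $s$ --- exactly the implication recorded in Section~\ref{sec:hyp}, parallel to C1 $\Rightarrow$ M1. Indeed, for $f\in{\rm Lip}_1(X)$ the uniform $s$-contractivity of $\tau_{\lambda^M}$ bounds the integrand $|f(\tau(\lambda^M,x))-f(\tau(\lambda^M,y))|$ pointwise by $s\,d(x,y)$, and integrating against the probability $P^q_{M,x}$ preserves this bound. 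With MP1 available, Corollary~\ref{cor:GEN existmedequil} yields a unique invariant probability $\mu_{\mathcal{R}}$ with $T_q(\mu_{\mathcal{R}})=\mu_{\mathcal{R}}$ and exponential convergence $T_q^{k}(\nu)\to\mu_{\mathcal{R}}$ in $d_{MK}$ for every $\nu\in\mathcal{P}(X)$, which is precisely (2).

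Finally, conclusion~(3) has also already been proved: Theorem~\ref{te:supp equal attrac} establishes $\supp(\mu_{\mathcal{R}})=A_{\mathcal{R}}$ under exactly the hypotheses W1, CP1, H2, H3, H4 assumed here, and the objects $A_{\mathcal{R}}$, $\mu_{\mathcal{R}}$ appearing there are the very ones produced in the two previous steps, so it applies verbatim. I expect the main obstacle to be nothing more than bookkeeping: confirming that the constant $s$ threaded through CP1, MP1 and the hypothesis $s+r\cdot M\cdot t<1$ is used consistently across the three invoked results, and that normalization of the IFSm guarantees each $q_x$ to be a probability, so that $P^q_{M,x}$ is a genuine probability measure on $\Lambda^M$. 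Once these identifications are in place, the three conclusions combine to give the full statement.
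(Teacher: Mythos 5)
Your proposal is correct and matches the paper's (implicit) argument exactly: the paper states this corollary as a direct assembly of the Remark after Theorem~\ref{thm: unique invariant set} (for the attractor under CP1), Corollary~\ref{cor:GEN existmedequil} via the recorded implication CP1 $\Rightarrow$ MP1 (for the invariant measure), and Theorem~\ref{te:supp equal attrac} (for the support), with no additional content. Your bookkeeping about the shared constants $s$, $M$ and the normalization of $q_x$ is exactly the verification the paper leaves to the reader.
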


\begin{remark}
	The same conclusion holds under the stronger (but simpler)
	hypotheses C1, H2, H3, H4, and supposing $s+ r \cdot t<1$,
\end{remark}

\section{The invariant measure depends continuously on the IFSp}\label{sec:eqmeasdependscontinuously}

 The next theorem is the analogous of \cite[Theorem 2]{mendivil1998generalization} for IFSm.  It shows that the Hutchinson measure depends continuously on the IFSm.

\begin{theorem}\label{thm: continuous dependence measure}
   Consider, for each $n \in \mathbb{N}$, an IFSm  $\mathcal{R}^{(n)} :=(X, \tau, q^{(n)})$  satisfying   conditions M1, H2, and H3,
    with $s+ rt<1$.
    Additionally assume the sequence $ q^{(n)}_{x} \to  q^{*}_{x}$ with respect to the distance $d_{MK}$ in $\mathcal{P}(\Lambda)$, uniformly in $x \in X$ (which means that, given any $\epsilon>0$ there exist a natural number $N$ such that $n\geq N$ implies  $d_{MK}\big(q_x^{(n)},q_x^{*}\big)<\epsilon$ for all $x\in M$).
    Let $\mu^{(n)}$ be the unique invariant measure for $\mathcal{R}^{(n)}$. Then there exists $\mu^{*}$ a unique invariant measure for $\mathcal{R}^{*} :=(X, \tau, q^{*})$, and $$\mu^{(n)}  \to  \mu^{*}$$ with respect to the distance $d_{MK}$ in $\mathcal{P}(X)$.
\end{theorem}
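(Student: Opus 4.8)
The plan is to combine the uniform contractivity of the Markov operators, which holds with a common constant by Corollary~\ref{cor:existmedequil}, with a quantitative estimate measuring how much two Markov operators differ when their underlying families of measures are close in $d_{MK}$.

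First I would check that the limiting family $q^{*}$ again satisfies M1, H2 and H3 with the \emph{same} constants $s,r,t$, so that Corollary~\ref{cor:existmedequil} applies and yields a unique invariant measure $\mu^{*}$ for $\mathcal{R}^{*}=(X,\tau,q^{*})$, with $T_{q^{*}}$ being $(s+rt)$-Lipschitz. Since $\tau$ is the same for every $n$, hypothesis H2 holds for $q^{*}$ unchanged (and in particular $r$ is automatically uniform in $n$). For H3, the bound $d_{MK}(q^{*}_{x},q^{*}_{y})\le t\,d(x,y)$ follows by passing to the limit in $d_{MK}(q^{(n)}_{x},q^{(n)}_{y})\le t\,d(x,y)$ using the triangle inequality and the uniform convergence $q^{(n)}\to q^{*}$. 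For M1, the integrand $\lambda\mapsto |f(\tau(\lambda,x))-f(\tau(\lambda,y))|$ is continuous on the compact space $\Lambda$, so weak-$*$ convergence $q^{(n)}_{x}\to q^{*}_{x}$ lets me pass to the limit in the M1 inequality. Write $c:=s+rt<1$.

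Next, using invariance $\mu^{(n)}=T_{q^{(n)}}(\mu^{(n)})$ and $\mu^{*}=T_{q^{*}}(\mu^{*})$, I would insert the hybrid operator $T_{q^{(n)}}(\mu^{*})$ and split
$$d_{MK}(\mu^{(n)},\mu^{*})\le d_{MK}\big(T_{q^{(n)}}(\mu^{(n)}),T_{q^{(n)}}(\mu^{*})\big)+d_{MK}\big(T_{q^{(n)}}(\mu^{*}),T_{q^{*}}(\mu^{*})\big).$$
The first term is at most $c\,d_{MK}(\mu^{(n)},\mu^{*})$ by the uniform contraction; absorbing it on the left gives
$$d_{MK}(\mu^{(n)},\mu^{*})\le \frac{1}{1-c}\,d_{MK}\big(T_{q^{(n)}}(\mu^{*}),T_{q^{*}}(\mu^{*})\big),$$
so it remains to control the discrepancy of the two operators on the \emph{fixed} measure $\mu^{*}$. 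Unfolding the duality of Definition~\ref{def:Markov operator}, for any $f\in {\rm Lip}_{1}(X)$,
$$\int_{X}f\,dT_{q^{(n)}}(\mu^{*})-\int_{X}f\,dT_{q^{*}}(\mu^{*})=\int_{X}\left(\int_{\Lambda}f(\tau(\lambda,x))\,dq^{(n)}_{x}(\lambda)-\int_{\Lambda}f(\tau(\lambda,x))\,dq^{*}_{x}(\lambda)\right)d\mu^{*}(x).$$
For each fixed $x$, hypothesis H2 gives that $\lambda\mapsto f(\tau(\lambda,x))$ is $r$-Lipschitz, so the inner difference is bounded by $r\,d_{MK}(q^{(n)}_{x},q^{*}_{x})$ (this also covers $r=0$, where the inner difference vanishes). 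Integrating and taking the supremum over $f$ yields
$$d_{MK}\big(T_{q^{(n)}}(\mu^{*}),T_{q^{*}}(\mu^{*})\big)\le r\sup_{x\in X}d_{MK}(q^{(n)}_{x},q^{*}_{x}),$$
and hence $d_{MK}(\mu^{(n)},\mu^{*})\le \tfrac{r}{1-c}\sup_{x\in X}d_{MK}(q^{(n)}_{x},q^{*}_{x})$, which tends to $0$ by the assumed uniform convergence.

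The main obstacle is the first paragraph: one must ensure the contraction constant $c$ is genuinely uniform in $n$ (guaranteed because $\tau$, and thus $r$, is fixed and the hypotheses are posited with common constants $s,t$) and that $q^{*}$ inherits M1--H3 so that $\mu^{*}$ exists at all. Once this is settled, the remaining steps are routine applications of the duality definition and H2.
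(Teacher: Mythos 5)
Your proposal is correct and follows essentially the same route as the paper: verify that $q^{*}$ inherits M1, H2, H3 (so $\mu^{*}$ exists via Corollary~\ref{cor:existmedequil}), then bound $d_{MK}(\mu^{(n)},\mu^{*})$ by a contraction term plus an operator-discrepancy term controlled by $r\sup_{x}d_{MK}(q^{(n)}_{x},q^{*}_{x})$, and absorb the contraction term to get the factor $\tfrac{r}{1-(s+rt)}$. The only difference is cosmetic: you insert the hybrid $T_{q^{(n)}}(\mu^{*})$ and use the contraction of $T_{q^{(n)}}$, while the paper inserts $T_{q^{*}}(\mu^{(n)})$ and uses the Lipschitz bound on $B_{q^{*}}(f)$ — a symmetric variant of the same decomposition.
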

\begin{proof}
   First, we need to show that   $\mathcal{R}^{*}$ satisfies the same hypothesis of $\mathcal{R}^{(n)}$, i.e., M1,   H2, and H3, with $s+rt< 1$. Since the maps remains unchanged, we only need to show M1 and H3.
   But M1 is immediate, since Monge-Kantorovich distance is equivalent to weak*-topology, and $f$ and $\tau$ are continuous functions.
   In order to prove that $\mathcal{R}^{*}$ satisfies H3, we  need to show that the map $q^{*}: X \to \mathcal{P}(\Lambda)$ is in ${\rm Lip}_{t}(X)$.

    Remember the Monge-Kantorovich distance in $\mathcal{P}(\Lambda)$:
    $$d_{MK}(q_x^*,q_y^*)=\sup_{g \in {\rm Lip}_{1}(\Lambda)}\left\{ \int_{\Lambda } g d q_x^* - \int_{\Lambda} g d q_y^* \right\}.$$
         Take $g \in Lip_1(\Lambda)$ and any pair  $x,t \in X$, then for any  $n$ we have
   $$ \int_{\Lambda} g(\lambda) dq_{x}^{*}(\lambda) - \int_{\Lambda} g(\lambda) dq_{y}^{*}(\lambda) = \int_{\Lambda} g(\lambda) dq_{x}^{*}(\lambda) - \int_{\Lambda} g(\lambda) dq_{x}^{(n)}(\lambda) +$$
   $$ + \int_{\Lambda} g(\lambda) dq_{x}^{(n)}(\lambda) - \int_{\Lambda} g(\lambda) dq_{y}^{(n)}(\lambda) + \int_{\Lambda} g(\lambda) dq_{y}^{(n)}(\lambda) - \int_{\Lambda} g(\lambda) dq_{y}^{*}(\lambda) \leq$$
   $$\leq d_{MK}(q_{x}^{*}, q_{x}^{(n)})+ d_{MK}(q_{x}^{(n)}, q_{y}^{(n)})+ d_{MK}(q_{y}^{(n)}, q_{y}^{*}) \leq $$
    $$\leq d_{MK}(q_{x}^{*}, q_{x}^{(n)})+ t d(x, y)+ d_{MK}(q_{y}^{(n)}, q_{y}^{*}),$$
    where we used H3 for $\mathcal{R}^{(n)}$.
    As $d_{MK}(q_{x}^{*}, q_{x}^{(n)}) \to 0$ and
    $d_{MK}(q_{y}^{*}, q_{y}^{(n)}) \to 0$ for any $x$ and $y \in X$, we conclude that
    \begin{equation}\label{eq:IFSlimite}
    	d_{MK}(q_{x}^{*}, q_{y}^{*}) \leq t d(x, y),
    \end{equation}
    which means $\mathcal{R}^{*}$ also satisfy
     hypothesis H3.

	Therefore, as a consequence of corollary \ref{cor:existmedequil}, there exist $\mu^*$ the unique invariant probability measure for
	$\mathcal{R}^{*}$.

    Now we will prove that  $\mu^{(n)}  \to  \mu^{*}$ by proving that $d_{MK}(\mu^{(n)}, \mu^{*})$ converges to zero.
    We are now dealing with probabilities in $\mP(X)$, where
    $$d_{MK}(\mu^{(n)},\mu^*)=\sup_{f \in {\rm Lip}_{1}(X)}\left\{ \int_{X}f d\mu^{(n)} - \int_{X}f d\mu^*\right\}.$$
    We consider $f \in {\rm Lip}_{1}(X)$ and evaluate
    $$ \int_{X} f(x) d\mu^{(n)}(x) - \int_{X} f(x) d\mu^{*}(x).$$
    We recall that $T_{q^{(n)}}(\mu^{(n)})=\mu^{(n)}$ and $T_{q^{*}}(\mu^{*})=\mu^{*}$, thus
    $$ \int_{X} f(x) d\mu^{(n)}(x) - \int_{X} f(x) d\mu^{*}(x)= \int_{X} f(x) dT_{q^{(n)}}(\mu^{(n)})(x) - \int_{X} f(x) dT_{q^{*}}(\mu^{*})(x)=$$
    $$=\int_{X} B_{q^{(n)}}(f)(x) d\mu^{(n)}(x) - \int_{X} B_{q^{*}}(f)(x) d\mu^{*}(x) = $$
    $$= \int_{X} B_{q^{(n)}}(f)(x) d\mu^{(n)}(x) - \int_{X} B_{q^{*}}(f)(x) d\mu^{(n)}(x) + $$ $$+
    \int_{X} B_{q^{*}}(f)(x) d\mu^{(n)}(x) - \int_{X} B_{q^{*}}(f)(x) d\mu^{*}(x).$$

   Let us control the first part of the last sum above: H2 implies that $f(\tau(\cdot,x)) \in {\rm Lip}_{r}(\Lambda)$ and we write
   $$B_{q^{(n)}}(f)(x)  - B_{q^{*}}(f)(x)=  \int_{\Lambda} f(\tau(\lambda,x)) dq_{x}^{(n)}(\lambda) - \int_{\Lambda} f(\tau(\lambda,x)) dq_{x}^{*}(\lambda) \leq r d_{MK}(q_{x}^{(n)}, q_{x}^{*}).$$
   As, $q_{x}^{(n)} \to q_{x}^{*}$ uniformly in $x$ we can find, for each $\varepsilon>0$, $N_{\varepsilon} \in \mathbb{N}$ such that,
    $$d_{MK}(q_{x}^{(n)}, q_{x}^{*})< \varepsilon, \forall n \geq N_{\varepsilon}.$$
   Thus,
   $$\int_{X} B_{q^{(n)}}(f)(x) d\mu^{(n)}(x) - \int_{X} B_{q^{*}}(f)(x) d\mu^{(n)}(x) \leq r \varepsilon,  \;\;\; \forall n \geq N_{\varepsilon}.$$

   To control the second part of the sum, we claim that $B_{q^{*}}(f)$
   is a Lipschitz function, with Lipschitz function given by $rt+s$:
   $$B_{q^{*}}(f)(x) - B_{q^{*}}(f)(y)= \int_{\Lambda} f(\tau(\lambda,x)) dq_{x}^{*}(\lambda) - \int_{\Lambda} f(\tau(\lambda,y)) dq_{y}^{*}(\lambda) = $$
   $$ = \int_{\Lambda} f(\tau(\lambda,x)) dq_{x}^{*}(\lambda)
   -\int_{\Lambda} f(\tau(\lambda,x)) dq_{y}^{*}(\lambda) +$$
   $$ +  \int_{\Lambda} f(\tau(\lambda,x)) dq_{y}^{*}(\lambda)
    - \int_{\Lambda} f(\tau(\lambda,y)) dq_{y}^{*}(\lambda) \leq $$
   $$  \leq r. d_{MK}(q_{x}^{*}, q_{y}^{*}) + s d(x,y) \leq
   (rt+s) d(x,y),$$
	where we used H2, M1 and equation \eqref{eq:IFSlimite}.

   Thus,
   $$\int_{X} B_{q^{*}}(f)(x) d\mu^{(n)}(x) - \int_{X} B_{q^{*}}(f)(x) d\mu^{*}(x) \leq (rt+s) d_{MK}(\mu^{(n)}, \mu^{*}).$$

   Using this two inequalities we obtain,
   $$d_{MK}(\mu^{(n)}, \mu^{*}) \leq  r \varepsilon + (rt+s) d_{MK}(\mu^{(n)}, \mu^{*})$$
   $$d_{MK}(\mu^{(n)}, \mu^{*}) \leq \frac{r}{1- (rt+s)} \varepsilon,
   \;\;\;  \forall n \geq N_{\varepsilon}.$$
   From this, we conclude that $\mu^{(n)} \to \mu^{*}$ with respect to the distance $d_{MK}$.
\end{proof}

\section{Examples}\label{sec:examples}
In this section we explore some possibilities for IFS fulfilling different sets of hypothesis and use it to show the scope of our theorems.

We now show three examples where condition H3 is satisfied.
\begin{example}
	We can easily see that the constant case studied by \cite{mendivil1998generalization} and \cite{Arbieto2016WeaklyHyperbolic}, among many others, satisfies H3  for $t=0$.
\end{example}

\begin{example}	
	Now we will suppose condition C1, and consider the case studied in the thermodynamical formalism (see \cite{lopes2015entropy}, \cite{MO}, \cite{Brasil2022} and \cite{ElismarJairo24}) where $$dq_x(\lambda)= e^{A(\tau_{\lambda}(x))} d\mu(\lambda)$$  for a Lipschitz
	potential $A:X \to \mathbb{R}$  and a fixed probability $\mu \in \mathcal{P}(\Lambda)$
	satisfying $$x \to \int_{\Lambda} e^{A(\tau_{\lambda}(x))} d\mu(\lambda) \equiv {\rm cte},$$	
	(We could also suppose that $A$ is only H\"older, but here we will consider the Lipschitz case.)
	We claim that hypothesis H3 is satisfied if $$t={\rm diam}(\Lambda)\; e^{\|A\|_{\infty}}\; {\rm Lip}(A) \; s .$$	
	To prove this, we consider any $g \in {\rm Lip}_{1}(\Lambda)$,
	in order to estimate $d_{MK}(q_{x}, q_{y})$.
	
	Let $\lambda_{0}={\rm argmax}(g) \in \Lambda$. Define  $\tilde{g}(\lambda)= g(\lambda)-g(\lambda_{0}) \leq 0$. We have
	$$\int_{\Lambda} g(\lambda) dq_{x}(\lambda) - \int_{\Lambda} g(\lambda) dq_{y}(\lambda)=\int_{\Lambda} g(\lambda)-g(\lambda_{0}) dq_{x}(\lambda) - \int_{\Lambda} g(\lambda)-g(\lambda_{0}) dq_{y}(\lambda)= $$
	$$=\int_{\Lambda} \tilde{g}(\lambda)\left[ e^{A(\tau_{\lambda}(x))}- e^{A(\tau_{\lambda}(y))}\right] d\mu(\lambda).$$
	Notice that
	$${g}(\lambda)- {g}(\lambda_{0})\leq 1 \cdot d(\lambda, \lambda_0),$$
	thus $\tilde{g}(\lambda) \leq {\rm diam}(\Lambda)$. Using that, and the fact that
	$$e^{A(\tau_{\lambda}(x))}- e^{A(\tau_{\lambda}(y))} \leq
	e^{\|A\|_{\infty}} |A(\tau_{\lambda}(x))-A(\tau_{\lambda}(y))  |  \leq    $$
	$$ \leq e^{\|A\|_{\infty}} {\rm Lip}(A) |\tau_{\lambda}(x)-\tau_{\lambda}(y)  | \leq
	e^{\|A\|_{\infty}} {\rm Lip}(A)\; s \; d(x,y),$$
	we get
	$$\int_{\Lambda} g(\lambda) dq_{x}(\lambda) - \int_{\Lambda} g(\lambda) dq_{y}(\lambda) \leq \int_{\Lambda} {\rm diam}(\Lambda) e^{\|A\|_{\infty}} {\rm Lip}(A) \; s \;  d(x,y)  d\mu(\lambda)\leq $$ $$\leq {\rm diam}(\Lambda) e^{\|A\|_{\infty}} {\rm Lip}(A) \; s \;  d(x,y).$$
	Therefore, $$d_{MK}(q_x,q_y)=\sup_{g \in {\rm Lip}_{1}(\Lambda)}\left\{ \int_{\Lambda}g dq_x - \int_{\Lambda}g dq_y\right\} \leq $$
	$$\leq
	{\rm diam}(\Lambda) e^{\|A\|_{\infty}} {\rm Lip}(A) \; s \;  d(x,y).$$
\end{example}

\begin{example}\label{ex:h}
	We will consider now the case where
	\begin{itemize}
		\item[a)] $h:X \to [0,1]$ a Lipschitz a function, with Lipschitz constant $C>0$.
		\item[b)] two probability measures $\nu_1$ and $\nu_2$ on $\mathcal{P}(\Lambda)$ with full support.
		\item[c)] $q:X \to \mathcal{P}(\Lambda)$ given by $q_x= (1-h(x)) \nu_1 + h(x) \nu_2.$
	\end{itemize}
	
	We claim that hypothesis H3 is satisfied if $$t=C  d_{MK}(\nu_1,\nu_2).$$
	
	To prove this, we consider any $g \in {\rm Lip}_{1}(\Lambda)$,
	in order to estimate $d_{MK}(q_{x}, q_{y})$.
	
	We have
	$$\int_{\Lambda} g(\lambda) dq_{x}(\lambda) - \int_{\Lambda} g(\lambda) dq_{y}(\lambda)=$$
	$$= (1-h(x)) \int g d\nu_1 +h(x) \int g d\nu_2 -
	(1-h(y)) \int g d\nu_1 -h(y) \int g d\nu_2 =
	$$
	$$ = (h(y)-h(x)) \left(\int g d\nu_1  - \int g d\nu_2 \right).$$
	
	Therefore, $$d_{MK}(q_x,q_y)=\sup_{g \in {\rm Lip}_{1}(\Lambda)}\left\{ \int_{\Lambda}g dq_x - \int_{\Lambda}g dq_y\right\} \leq
	$$
	$$ \leq |h(y)-h(x)| \sup_{g \in {\rm Lip}_{1}(\Lambda)}\left\{ \int_{\Lambda}g d\nu_1 - \int_{\Lambda}g d \nu_2 \right\}
	\leq C  d_{MK}(\nu_1,\nu_2) d(x,y).$$
	
\end{example}

\begin{remark}
	We can easily prove that, in the Example~\ref{ex:h},   the Markov operator  $T_q:=B_{q}^*: \mathcal{P}(X) \to \mathcal{P}(X)$ can be written as
	$$T_q(\mu)=T_{\nu_1}((1-h) \mu ) + T_{\nu_2}(h \mu ).$$
	Note that $T_q$ is {\bf not} a constant linear combination of $T_{\nu_1}$ and $T_{\nu_2}$.
\end{remark}
An eventually contractive example obtained from a GIFS,  which satisfy conditions CP1 and MP1, is the following:
\begin{example}
	In 2008, \cite{Mihail2008} introduced the notion of generalized IFS, GIFS in short. The generalization was to consider maps from $X^m \to X$ in a metric space $(X,d)$ (called degree $m$ GIFS), instead a family of self maps in the classical IFS theory. Later, \cite{strobin2015attractors} show that  these GIFS has attractors that are not attractors of any contractive IFS. In the last years we can find several generalizations and applications of this ideas across the literature. An interesting case derived of that is Oliveira~\cite{Oliveira2017}, where an IFS is induced by a skew map originated from a GIFS. Those maps are not contractive, but they are eventually contractive leading to the existence a new class of attractors for GIFS. Inspired by that, we consider the following example.\\
	In this example we could have considered $\Lambda=\{1,\ldots, n\}$ finite, to focus on known theorems and to keep the attention at the eventual contraction rate. However we point out that in 2015 the study of GIFS was extended for an arbitrary set of maps by Dumitru~\cite{dumitru2015topological} (see also Miculescu~\cite{miculescu2018canonical}). We will consider $(X,d)$, a compact metric space, and a Lipschitz contractive GIFS of degree $m \geq 2$ denoted $S=(X, \phi_\lambda)_{\lambda \in \Lambda}$, where $\Lambda$ is a possibly infinite compact set. The family of maps $\phi_\lambda: X^m \to X$ satisfy\\
	$d_X(\phi_\lambda(x_1, \ldots , x_m), \phi_\lambda(y_1, \ldots , y_m)) \leq \sum_{i}^{m} a_{i}^{\lambda} d_X(x_i, y_i), \; 1 \leq i \leq m, \; \lambda \in \Lambda$. Thus $\phi_{\lambda}$ is a Lipschitz  contraction if $b_{\lambda}=\sum_{i}^{m} a_{i}^{\lambda} < 1, \forall \lambda \in \Lambda$, that is,
	$$d_X(\phi_\lambda(x), \phi_\lambda(y)) \leq b_{\lambda}  d_m(x, y).$$
	Additionally, we must require
	$s=\sup_{\lambda \in \Lambda} b_{\lambda} <1$ in order to obtain the power $m$ of induced IFS, $\mathcal{R}$, to be a contraction.
	
	Considering the maximum metric $$d_m((x_1, \ldots , x_m), (y_1, \ldots , y_m))= \max_{1\leq i \leq m}  d_X(x_i, y_i)$$ in $X^m$ we can see that
	$$d_X(\phi_\lambda(x_1, \ldots , x_m), \phi_\lambda(y_1, \ldots , y_m)) \leq \left(\sum_{i=1}^{m} a_{i}^{\lambda}\right)  d_m((x_1, \ldots , x_m), (y_1, \ldots , y_m)),$$ thus $\phi_\lambda$ is a Lipschitz  contraction if $b_j=\sum_{i=1}^{m} a_{i}^{\lambda} <1$, that is,
	$$d_X(\phi_\lambda(x), \phi_\lambda(y)) \leq b_\lambda  d_m(x, y).$$

	We now introduce the IFS given by $\mathcal{R}=(Z, \tau)$, where $(Z=X^m, d_m)$ is the complete metric space, $\Lambda$ is the same space as the GIFS and $\tau_{\lambda}: Z \to Z$ is given by
	$$\tau_{\lambda}(z) = (z_2, \ldots , z_{m-1}, \phi_{\lambda}(z)), $$
	for $\lambda \in \Lambda, \; \forall z=(z_1, \ldots , z_{m}) \in Z.$
	
	We notice that $\mathcal{R}$ is not a  Lipschitz contractive IFS, because for $$z=(z_1, z_2, \ldots, z_{m-1} , z_{m}),  z'=(z_1, z_2, \ldots, z_{m-1} , z'_{m}) \in Z$$ we have
	$d_m(z,z')=d_X(z_{m},z'_{m})$ and
	$$d_m(\tau_{\lambda}(z),\tau_{\lambda}(z'))= d_m( (z_2, \ldots, z_{m-1} , z_{m}, \phi_{\lambda}(z)), (z_2, \ldots, z_{m-1} , z'_{m}, \phi_{\lambda}(z')))=$$
	$$=\max \left[ d_X(z_{m},z'_{m}), d_X(\phi_{\lambda}(z),\phi_{\lambda}(z'))\right].$$
	
	From the contractivity of the GIFS we obtain
	$$d_X(\phi_{\lambda}(z), \phi_{\lambda}(z')) \leq \left(\sum_{i=1}^{m-1} a_{i}^{\lambda} \; d_X(z_i,z_i)\right) + a_{m}^{\lambda} \;d_X(z_m,z'_m) \leq  d_X(z_m,z'_m).$$
	Thus
	$$d_m(\tau_{\lambda}(z),\tau_{\lambda}(z'))=  d_X(z_{m},z'_{m}) = d_m(z,z').$$
	
	Consider $s = \max_{\lambda\in\Lambda} b_\lambda <1$ then  $\tau_{\lambda}(z)$ is a Lipschitz  contraction with contraction factor $s \in (0,1)$. Indeed, fixed $\lambda_1,\ldots, \lambda_m \in \Lambda$ we obtain
	$$d_m(\tau_{\lambda_m}(x),\tau_{\lambda_m}(x'))= d_m( (x_2, \ldots, x_{m-1} , x_{m}, \phi_{\lambda_m}(x)), (x'_2, \ldots, x'_{m-1} , x'_{m}, \phi_{\lambda_m}(x')))=$$
	$$=\max \left[ d_X(x_{2},x'_{2}), \ldots, d_X(x_{m},x'_{m}), d_X(\phi_{\lambda_m}(x),\phi_{\lambda_m}(x'))\right] \leq $$ $$\leq \max \left[ d_X(x_{2},x'_{2}), \ldots, d_X(x_{m},x'_{m}), b_{\lambda_m} d_m(x,x')\right].$$
	Taking\\
	$x=\tau_{\lambda_{m-1}}(y)=(y_2, \ldots, y_{m-1} , y_{m}, \phi_{\lambda_{m-1}}(y))$ and\\
	$x'=\tau_{\lambda_{m-1}}(y')=(y'_2, \ldots, y'_{m-1} , y'_{m}, \phi_{\lambda_{m-1}}(y'))$\\ we obtain
	$$d_m(\tau_{\lambda_m}(\tau_{\lambda_{m-1}}(y)),\tau_{\lambda_m}(\tau_{\lambda_{m-1}}(y'))) \leq $$ $$ \leq \max \left[ d_X(y_{3},y'_{3}), \ldots, d_X(\phi_{\lambda_{m-1}}(y),\phi_{\lambda_{m-1}}(y')), b_{\lambda_m} d_m(\tau_{\lambda_{m-1}}(y),\tau_{\lambda_{m-1}}(y'))\right] \leq $$
	$$\leq \max \left[ d_X(y_{3},y'_{3}), \ldots, b_{\lambda_{m-1}} d_m(y,y'), b_{\lambda_m} b_{\lambda_{m-1}} d_m(y,y') \right].$$
	
	After $m$ steps for $\tau_{\lambda^{m}} x =(\tau_{\lambda_{m}} \circ \cdots   \circ \tau_{\lambda_{1}})(x)$ we obtain a contraction factor of
	$$\max( b_{\lambda_1}, b_{\lambda_2} b_{\lambda_{1}}, \ldots, b_{\lambda_m} b_{\lambda_{m-1}} \cdots b_{\lambda_2} b_{\lambda_{1}}) \leq s.$$
	Thus $\mathcal{R}$ satisfy  CP1, there is $m \geq 1$ and $0<s <1$ such that the map 	$\tau_{\lambda^{m}}(\cdot) \in {\rm Lip}_{s}(X)$, uniformly with respect to $\lambda^m$.
\end{example}

\section*{Acknowledgements.}
Elismar R. Oliveira is partially supported by MATH-AMSUD under project GSA, financed by CAPES - process 88881.694479/2022-01, and by CNPq grant 408180/2023-4.


\begin{thebibliography}{99}

\bibitem[AJS16]{Arbieto2016WeaklyHyperbolic}
Alexander Arbieto, Andr{\'{e}} Junqueira, and Bruno Santiago.
\newblock On weakly hyperbolic iterated function systems.
\newblock {\em Bull. Braz. Math. Soc. (N.S.)}, 48(1):111--140, oct 2016.

\bibitem[BD85]{BarnDemko85}
M.~F. Barnsley and S.~Demko.
\newblock Iterated function systems and the global construction of fractals.
\newblock {\em Proceedings of the Royal Society of London. Series A,
  Mathematical and Physical Sciences}, 399(1817):243--275, 1985.

\bibitem[BDEG88]{barnsley1988invariant}
Michael F. Barnsley, Stephen G. Demko, John H. Elton, and Jeffrey S. Geronimo.
\newblock Invariant measures for markov processes arising from iterated
function systems with place-dependent probabilities.
\newblock In {\em Annales de l'IHP Probabilit{\'e}s et statistiques},
volume~24, pages 367--394, 1988.

\bibitem[BOS22]{Brasil2022}
Jader~E. Brasil, Elismar~R. Oliveira, and Rafael~Rig{\~a}o Souza.
\newblock Thermodynamic formalism for general iterated function systems with  measures.
\newblock {\em Qualitative Theory of Dynamical Systems}, 22(1):19, Dec 2022.

\bibitem[BP13]{berinde2013role}
Vasile Berinde and Madalina Pacurar.
\newblock The role of the {P}ompeiu-{H}ausdorff metric in fixed point theory.
\newblock {\em Creative Mathematics and Informatics}, 22(2):143--150, 2013.

\bibitem[DISS15]{dumitru2015topological}
Dan Dumitru, Loredana Ioana, R{\u{a}}zvan-Cornel Sfetcu, and Filip Strobin.
\newblock Topological version of generalized (infinite) iterated function
  systems.
\newblock {\em Chaos, Solitons \& Fractals}, 71:78--90, 2015.

\bibitem[Doo48]{Doob48}
Joseph~L Doob.
\newblock Asymptotic properties of markoff transition prababilities.
\newblock {\em Transactions of the American Mathematical Society},
  63(3):393--421, 1948.

\bibitem[Dum13]{Dumitru2013AttractorsOI}
Dan Dumitru.
\newblock Attractors of infinite iterated function systems containing
  contraction type functions.
\newblock {\em An. {\c{S}}tiin{\c{t}}. Univ. Al. I. Cuza Ia{\c{s}}i, Mat. NS},
  59:281--298, 2013.

\bibitem[Eda96]{Edalat}
{\sc Edalat, A.}
\newblock Power domains and iterated function systems.
\newblock {\em Information and Computation},  124(2), 182–197 (1996).

\bibitem[Elt87]{elton1987ergodic}
John H. Elton.
\newblock An ergodic theorem for iterated maps.
\newblock {\em Ergodic Theory and Dynamical Systems}, 7(4):481--488, 1987.

\bibitem[Fan99]{fan1999iterated}
Ai Hua Fan and Ka-Sing Lau.
\newblock Iterated function system and ruelle operator.
\newblock {\em Journal of mathematical analysis and applications},
231(2):319--344, 1999.

\bibitem[GLJ05]{lukawska_jachymski_2005}
Gertruda Gw{\'{o}}{\'{z}}d{\'{z}}-Lukawska and Jacek Jachymski.
\newblock The {H}utchinson-{B}arnsley theory for infinite iterated function   systems.
\newblock {\em Bulletin of the Australian Mathematical Society},
  72(3):441--454, 12 2005.

\bibitem[HMU02]{hanus2002thermodynamic}
Pawel Hanus, R. Mauldin, and Mariusz Urba{\'n}ski.
\newblock Thermodynamic formalism and multifractal analysis of conformal
infinite iterated function systems.
\newblock {\em Acta Mathematica Hungarica}, 96(1-2):27--98, 2002.

\bibitem[Hut81]{hutchinson1981fractals}
John E. Hutchinson.
\newblock Fractals and self similarity.
\newblock {\em Indiana University Mathematics Journal}, 30(5):713--747, 1981.

\bibitem[Kae04]{kaenmaki2004natural}
Antti K{\"a}enm{\"a}ki.
\newblock On natural invariant measures on generalised iterated function
systems.
\newblock {\em Ann. Acad. Sci. Fenn. Math}, 29(2):419--458, 2004.

\bibitem[Lew93]{lewellen1993self}
Gary~B Lewellen.
\newblock Self-similarity.
\newblock {\em The Rocky Mountain Journal of Mathematics}, pages 1023--1040,
  1993.

\bibitem[LMMS15]{lopes2015entropy}
Artur~O Lopes, Jairo~K Mengue, Joana Mohr, and Rafael~R Souza.
\newblock Entropy and variational principle for one-dimensional lattice systems  with a general a priori probability: positive and zero temperature.
\newblock {\em Ergodic Theory and Dynamical Systems}, 35(6):1925--1961, 2015.

\bibitem[LMS02]{Lasota2002}
Andrzej Lasota, J{\'o}zef Myjak, and Tomasz Szarek.
\newblock Markov operators with a unique invariant measure.
\newblock {\em Journal of mathematical analysis and applications},
  276(1):343--356, 2002.

\bibitem[LO09]{lopes2009entropy}
Artur O. Lopes and Elismar R. Oliveira.
\newblock Entropy and variational principles for holonomic probabilities of {IFS}.
\newblock {\em Discrete \& Continuous Dynamical Systems}, 23(3):937--955, 2009.

\bibitem[Men98]{mendivil1998generalization}
Franklin Mendivil.
\newblock A generalization of {IFS} with probabilities to infinitely many maps.
\newblock {\em The Rocky Mountain journal of mathematics}, 28(3):1043--1051,
  1998.


\bibitem[MM08]{Mihail2008}
A.~Mihail and R.~Miculescu.
\newblock Applications of fixed point theorems in the theory of generalized
  ifs.
\newblock {\em Fixed Point Theory and Applications}, 2008(1):312876, May 2008.

\bibitem[MN24]{MN24} Mierlus–Mazilu, Ion, and Lucian Nita.
\newblock  Analyzing {C}onvergence in {S}equences of {U}ncountable {I}terated {F}unction {S}ystems-{F}ractals and {A}ssociated {F}ractal {M}easures.
\newblock Mathematics 12.13 (2024): 2106.

\bibitem[MO17]{MO}
 Mengue, J., and Oliveira, E.
\newblock Duality results for iterated function systems with a general family of branches.
\newblock {\em Stochastics and Dynamics},  Vol. 17, No. 03,  (2017), 1750021.

\bibitem[MO25]{ElismarJairo24}
Mengue, Jairo K., and Elismar R. Oliveira.
\newblock  Large deviation for {G}ibbs probabilities at zero temperature and invariant idempotent probabilities for iterated function systems.
\newblock  Journal of Statistical Physics 192.2 (2025): 18.

\bibitem[MU00]{mauldin2000parabolic}
RD~Mauldin and M~Urba{\'n}ski.
\newblock Parabolic iterated function systems.
\newblock {\em Ergodic Theory and Dynamical Systems}, 20(5):1423--1447, 2000.

\bibitem[MU18]{miculescu2018canonical}
Radu Miculescu and Silviu-Aurelian Urziceanu.
\newblock The canonical projection associated with certain possibly infinite  generalized iterated function systems as a fixed point.
\newblock {\em Journal of Fixed Point Theory and Applications}, 20(4):141,
  2018.

\bibitem[MU96]{Urb96}
R. Daniel Mauldin and Mariusz Urba\'{n}ski.
\newblock Dimensions and measures in infinite iterated function systems.
\newblock {\em Proc. London Math. Soc. (3)}, 73(1):105--154, 1996.



\bibitem[Oli17]{Oliveira2017}
Elismar~R. Oliveira.
\newblock The ergodic theorem for a new kind of attractor of a {GIFS}.
\newblock {\em Chaos, Solitons and Fractals}, 98:63--71, may 2017.

\bibitem[Rue67]{ruelle1967variational}
D.~Ruelle.
\newblock A variational formulation of equilibrium statistical mechanics and  the {G}ibbs phase rule.
\newblock {\em Comm. Math. Phys.}, 5:324--329, 1967.

\bibitem[Rue68]{ruelle1968statistical}
David Ruelle.
\newblock Statistical mechanics of a one-dimensional lattice gas.
\newblock {\em Communications in Mathematical Physics}, 9:267--278, 1968.

\bibitem[SSU01]{simon2001invariant}
K{\'a}roly Simon, Boris Solomyak, and Mariusz Urba{\'n}ski.
\newblock Invariant measures for parabolic {IFS} with overlaps and random
continued fractions.
\newblock {\em Transactions of the American Mathematical Society},
353(12):5145--5164, 2001.

\bibitem[Ste02]{stenflo2002uniqueness}
{\"O}rjan Stenflo.
\newblock Uniqueness of invariant measures for place-dependent random
iterations of functions.
\newblock In {\em Fractals in Multimedia}, pages 13--32. Springer, 2002.

\bibitem[Str15]{strobin2015attractors}
Filip Strobin.
\newblock Attractors of generalized {IFS}s that are not attractors of {IFS}s.
\newblock {\em Journal of Mathematical Analysis and Applications},
  422(1):99--108, 2 2015.

\bibitem[Wal75]{walters1975ruelle}
Peter Walters.
\newblock Ruelle’s operator theorem and g-measures.
\newblock {\em Transactions of the American Mathematical Society},
214:375--387, 1975.

\end{thebibliography}
\end{document}